\DeclareSymbolFont{cyrletters}{OT2}{wncyr}{m}{n}
\DeclareMathSymbol{\Sha}{\mathalpha}{cyrletters}{"58}
\newcounter{braid}
\newcounter{strands}
\def\cross{%
  \@ifnextchar^{\message{Got sup}\cross@sup}{\cross@sub}}
\def\cross@sup^#1_#2{\render@cross{#2}{#1}}
\def\cross@sub_#1{\@ifnextchar^{\cross@@sub{#1}}{\render@cross{#1}{1}}}
\def\cross@@sub#1^#2{\render@cross{#1}{#2}}
\def\render@cross#1#2{
  \def\strand{#1}
  \def\crossing{#2}
  \pgfmathsetmacro{\cross@y}{-\value{braid}*\braid@h}
  \pgfmathtruncatemacro{\nextstrand}{#1+1}
  \foreach \thread in {1,...,\value{strands}}
  {
    \pgfmathsetmacro{\strand@x}{\thread * \braid@w}
    \ifnum\thread=\strand
    \pgfmathsetmacro{\over@x}{\strand * \braid@w + .5*(1 - \crossing) * \braid@w}
    \pgfmathsetmacro{\under@x}{\strand * \braid@w + .5*(1 + \crossing) * \braid@w}
    \draw[braid] \pgfkeysvalueof{/tikz/braid start} +(\under@x pt,\cross@y pt) to[out=-90,in=90] +(\over@x pt,\cross@y pt -\braid@h);
    \draw[braid] \pgfkeysvalueof{/tikz/braid start} +(\over@x pt,\cross@y pt) to[out=-90,in=90] +(\under@x pt,\cross@y pt -\braid@h);
    \else
    \ifnum\thread=\nextstrand
    \else
     \draw[braid] \pgfkeysvalueof{/tikz/braid start} ++(\strand@x pt,\cross@y pt) -- ++(0,-\braid@h);
    \fi
   \fi
  }
  \stepcounter{braid}
}
\tikzset{braid/.style={double=\pgfkeysvalueof{/tikz/braid colour},double distance=1pt,line width=2pt,white}}
\newcommand{\braid}[2][]{%
  \begingroup
  \pgfkeys{/tikz/strands=2}
  \tikzset{#1}
  \pgfkeysgetvalue{/tikz/braid width}{\braid@w}
  \pgfkeysgetvalue{/tikz/braid height}{\braid@h}
  \setcounter{braid}{0}
  \let\sigma=\cross
  #2
  \endgroup
}
\newtheorem{theorem}{Theorem}
\newtheorem{lemma}[theorem]{Lemma}
\newtheorem{construction}[theorem]{Theorem-Construction}
\def\Z{\mathbb{Z}}
\def\C{\mathbb{C}}
\def\Q{\mathbb{Q}}
\def\C{\mathbb{C}}
\def\N{\mathbb{N}}
\def\F{\mathbb{F}}
\def\G{\mathbb{G}}
\def\qed{\hfill$\square$\medskip}
\def\Zpk{\mathbb{Z}/p^{k}}
\def\Zpk1{\mathbb{Z}/p^{k-1}}
\newcommand{\rref}[1]{(\ref{#1})}
\newcommand{\beg}[2]{\begin{equation}\label{#1}#2\end{equation}}
\def\r{\rightarrow}
\def\F{\mathbb{F}}
\def\sl2{\widetilde{SL_{2}(\Z)}}
\title[Equivariant complex-oriented spectra]{Equivariant formal group laws and 
complex-oriented spectra over primary cyclic groups:
Elliptic curves, Barsotti-Tate groups, and other examples}
\author{Po Hu, Igor Kriz and Petr Somberg}
\thanks{The authors acknowledge support by grants GA\,CR P201/12/G028 and GA\,CR 19-28628X.
Kriz also acknowledges the support of a Simons Collaboration Grant.}
\begin{document}
\maketitle

\begin{abstract}
We explicitly construct
and investigate a number of examples
of $\Z/p^r$-equivariant formal group laws and complex-oriented spectra,
including those coming from elliptic curves and $p$-divisible groups,
as well as some other related examples.
\end{abstract}

\section{Introduction}

The concept of complex-oriented generalized cohomology theory, i.e. a commutative ring spectrum $E$ for
which all complex bundles (equivalently, the universal line bundle on $\C P^\infty$) are oriented, has 
played a central role in stable homotopy theory. The reason is that $E^*(\C P^\infty)$ then has 
a structure of a ($1$-dimensional commutative) formal group law over the coefficient ring $E^*$, 
and that generically, via this connection, formal group laws do tend to appear in homotopy theory. 
The formal group law (FGL) of ordinary (singular) cohomology is additive, the FGL of K-theory is
multiplicative.
In 1969, D.Quillen \cite{q} proved that the formal group law of complex cobordism $MU$ is the universal formal group law,
thus identifying the coefficients $MU_*$ with the Lazard ring. To this present day, the best 
definition of ``elliptic cohomology" is as a complex-oriented cohomology theory whose formal group
law comes from an elliptic curve. Complex-oriented cohomology theories associated with Lubin-Tate laws have been
constructed by Landweber \cite{land}, and given $E_\infty$-ring structures by Goerss and Hopkins \cite{gh}.

\vspace{3mm}

It is a very natural question whether this spectacular story has an $G$-equivariant analogue. The answer is 
very likely positive, at least in the case of a finite abelian group $G$. Yet, at the present time, only 
a small part of the story has been developed, as we shall explain below. The aim of this paper is to advance
this work in progress a little further. 

\vspace{3mm}

A systematic treatment of equivariant spectra can be found in \cite{lms}. The essential point is that 
in order to have Spanier-Whitehead duality in the equivariant case, cohomology theories must be indexed
not by integers, but by virtual representations of the group $G$ (because a finite-dimensional $G$-CW-complex
can be embedded into a finite-dimensional representation, but of course not, in general, a trivial one). This
corresponds to indexing a spectrum by finite-dimensional subrepresentations of a {\em complete universe} 
$\mathcal{U}$, which, for our purposes, can be thought of as just a sum of countably many copies of $\C[G]$.

\vspace{3mm}

A complex-oriented $G$-equivariant spectrum $E$ then is one for which every $G$-equivariant complex
bundle is $E$-oriented. Again, it suffices to verify this for the universal complex line bundle on
$\C P^\infty_G$, which is the space of complex lines through the origin in $\mathcal{U}$.
As a matter of fact, because of failure of transversality,
geometric cobordism of $G$-equivariant compact smooth manifolds is not
represented by a complex-oriented equivariant spectrum.
However, a complex-oriented 
$G$-equivariant analog $MU_G$ of the complex Thom spectrum can be formed, and represents
a certain stabilization of equivariant complex cobordism, corresponding precisely to ``fixing" the 
transversality problem. The coefficients of $MU_G$ for $G=\Z/p$ with $p$ prime were calculated in 
\cite{zp}. The calculation for a finite abelian group was made in \cite{ak}. The answer is substantially more
complicated than the Lazard ring, and it is not apparent at first glance what it means algebraically.

\vspace{3mm}

The concept of a $G$-equivariant formal group for an abelian group $G$ was introduced in \cite{cgk}
to model the structure present on $E^*(\C P^\infty_G)$ for a $G$-equivariant complex-oriented spectrum $E$
and a finite abelian group $G$. The concept is also much more convoluted: for $G$ a finite abelian group,
an {\em equivariant FGL} over a commutative ring $A$ consists of a formal $A$-coalgebra $B$
together with a coalgebra map 
\beg{edefm}{B\r A[G^*]^\vee} 
(where $G^*$ is the character group of $G$ and $?^\vee$ 
denotes the $A$-linear dual), such that the defining ideal $I$ of $B$ is generated by $\prod_{L\in G^*} x_L$
where $x_L\in B$ are elements such that $B/(x_L)\cong A$, and $x_L$ are related to one another
by the comultiplication \rref{edefm} ``in the obvious way" (i.e. taking the comultiplication on $X_L$,
projecting one coordinate via \rref{edefm}, and taking the value at 
another character $M$, gives $X_{L(M^{-1})}$.

\vspace{3mm}

It is easy to see the universal $G$-equivariant FGL is represented by a commutative ring $L_G$, and from
the complex orientation, one obtains a map 
\beg{emugor}{L_G\r (MU_G)_*.}
It is therefore a natural question whether \rref{emugor} is an isomorphism. This was recently answered
in the affirmative by Hausmann \cite{hausmann}. Hanke and Wiemeler \cite{hw} previously
proved it in the case $G=\Z/2$, using an explicit presentation
of $(MU_{\Z/2})_*$ in terms of generators and defining relations, due to Strickland \cite{strick}. Kriz
and Lu \cite{klu} also recently obtained an alternative proof for a general abelian compact Lie group, using
other methods.

\vspace{3mm}
Strickland \cite{stricmult} defined the equivariant formal group law associated with an elliptic curve,
and the concept of equivariant elliptic cohomology. It should be noted that $S^1$-equivariant elliptic cohomology
is a concept desirable also from the purely non-equivariant point of view: when defining a non-equivariant 
elliptic spectrum
as a complex-oriented spectrum being associated with the FGL of an elliptic curve, then $E^*(\C P^\infty)$
should be a completion of the ring of coefficients of $\mathcal{E}_{S^1}$. Such a spectrum, as far as we know, has
in the published literature only been constructed rationally by Greenlees \cite{grat,ag}. (The rational
case is non-trivial over $G=S^1$.) One can also ask about equivariant lifts (or, in other words, deformations), 
of Lubin-Tate formal group laws.

\vspace{3mm}
The goal of this paper is to investigate these questions  by looking at concrete
examples in the case of primary cyclic groups,
i.e. the groups $G=\Z/(p^r)$, and observing some of the properties of these constructions. In the elliptic case, we construct, completely explicitly, using equivariant
isotropy separation, a $\Z/p$-equivariant elliptic spectrum on one version of the ``universal" elliptic curve for
$p>3$. One then observes that this construction of equivariant elliptic cohomology only
depends on the $p$-divisible group associated with the elliptic curve. It is therefore
natural to look for $\Z/p^r$-equivariant deformations
of Lubin-Tate laws from $p$-divisible groups. We do describe such a construction, starting
with the underlying
{\em universal deformation rings $R$ of $p$-divisible groups over perfect fields with one-dimensional
formal parts}. More precisely,
the equivariant FGL is over the coefficient ring of the $p^r$-torsion subgroup of the $p$-divisible group on $R$.
We also construct complex-oriented $\Z/p^r$-equivariant cohomology theories (which are
$E_\infty$-ring spectra) associated to these equivariant FGL's. We call them {\em Barsotti-Tate cohomology 
theories}. Because we started with $p$-divisible and not formal groups, the resulting theory is non-trivial, i.e. 
is not just a Borel cohomology theory. This can be observed in the elliptic case, where at primes of
ordinary good reduction, we obtain the equivariant multiplicative formal group law. The story is essentially
the same for $G=\Z/(p^r)$ as for $G=\Z/p$, but the algebra is more complicated,
and more involved isotropy separation techniques are required. For
that reason, the body of this paper is written for $G=\Z/p$, and the generalization to $\Z/(p^r)$ is 
treated in Section \ref{szpr}. 

\vspace{3mm}
In Section \ref{sfurth}, we then discuss these constructions further. Namely, one observes that
all the $\Z/p^r$-equivariant formal group laws and spectra are completely separated by isotropy,
i.e. that we are dealing with products of rings (or ring spectra). We observe that they are still 
non-trivial in the sense that they give non-trivial deformations of the underlying non-equivariant
formal group law, but this begs the question whether Zariski-connected examples exist. 
We observe that such examples (not coming from $p$-divisible groups) are easy to construct, but
it is harder to construct them on Noetherian rings. Possible constructions use 
the complete calculation of the coefficients of $\Z/p^r$-equivariant stable cobordism due to Hu \cite{hu},
which is very recent work. Because of this, in limit our discussion in the present paper to the case $p^r=2$,
where the cobordism calculation is ``classical," due to Strickland \cite{strick}.

\vspace{3mm}
Finally, we should mention that there have been quite a few other developments in the direction
of the program we outlined, both before and after our paper was written, even though the overlap 
of the present paper with the existing work is surprisingly small. Jacob Lurie has written a series of
papers \cite{lurie,lurie1} on elliptic cohomology, in which equivariant elliptic cohomology is also
discussed. As far as we can tell,  those papers do not discuss ``genuine" equivariant
cohomology theories in the sense considered here. Since the first version of the present paper was first
submitted for publication, however, a paper of Gepner and Meier \cite{gepnermayer}, which set out to remedy
this in the general case, appeared on the arXiv. One should also mention another theorem of J. Lurie
(see Goerss \cite{goerss}), stating that a map from a Deligne-Mumford stack into the
stack of formal groups is realizable in the category of $E_\infty$-ring
spectra if it factors through the stack of $p$-divisible groups. This theorem played a crucial
role for example in the construction of topological automorphic forms by Behrens and Lawson \cite{bl}.
The role of $p$-divisible groups in that context is somewhat different; in our case, the 
key point (which also favors the case of $\Z/p^r$ as ``special") is that these groups are 
subgroups of $S^1$, which, from the point of view of global homotopy theory, underlies the
structure of formal group laws. 

On the subject of global homotopy theory, we cannot, of course, neglect to mention again Hausmann's spectacular
proof of the Greenlees conjecture \cite{hausmann}, stating that for any abelian compact Lie
group $G$, $MU^G_*$ represents the universal $G$-equivariant formal group law. This generalizes
a previous result of Hanke and Wiemeler \cite{hw}, which established this for $G=\Z/2$.
Hausmann used global homotopy theory for his proof, and a different proof of universality of 
$MU^G_*$ was since obtained by Kriz and Lu \cite{klu}, which also gives a presentation
of $MU^G_*$ as a ``deformation" of the Lazard ring.

The development calculationally closest to the present paper is the complete computation by Hu \cite{hu}
of the coefficient of $\Z/p^r$-equivariant stable cobordism. This computation, in fact,
allows some of the results of the present paper to be further generalized, but this is rather
technical, and for this reason will be left for future work.

\vspace{3mm}
The present paper is organized as follows. In Section \ref{sell}, we prove some algebraic preliminaries. 
In Section \ref{sequivconst}, we construct $\Z/p$-equivariant elliptic spectra. We further investigate
their formal groups in Section \ref{sco}, and prove that they coincide with Strickland's equivariant elliptic
FGL's. In Section \ref{spdiv}, we construct equivariant formal group laws from $p$-divisible
groups with one-dimensional formal parts. In Section \ref{sbt}, we construct $\Z/p$-equivariant
Barsotti-Tate cohomology. 
In Section \ref{szpr}, we carry out the generalizations of all of the constructions of this paper to the case
of the group $\Z/(p^r)$. In Section \ref{sfurth},
we further discuss these constructions, and give constructions of equivariant deformations 
of multiplicative and Lubin-Tate formal group laws, and corresponding complex-oriented
spectra, which are Zariski-connected. Section \ref{sapp} is an Appendix containing a brief comparison of the equivariant 
isotropy separation techniques we use in the present paper with the methods of \cite{ak}, in view of
the recent results \cite{hu}. 

\vspace{5mm}

\section{On the $p$-torsion of universal elliptic curves}\label{sell}

Let $R=\Z[1/6][g_2,g_3][\Delta^{-1}]$ where $\Delta=g_2^3-27g_3^2$. Over $Spec(R)$, we have
the elliptic curve
$$E=Proj(R[X,Y,Z]/(Y^2Z-4X^3+g_2XZ^2+g_3Z^3)).$$
Denote by ${}_pE\subset E$ the reduced closed group subscheme of $p$-torsion points. Consider the affine
open subset $U=U_Y$. Thus, setting $x=X/Y$, $u=Z/Y$, we have
$$U=Spec(R_0)$$
where
\beg{er0}{R_0=R[x,u]/(u-4x^3+g_2xu^2+g_3u^3).}
The open set $U$ is the complement of the closed subset ${}_2E\smallsetminus {}_1E$
in $E$. (Here, ${}_1E$ is, of course, the unit.)
As Strickland \cite{stricmult}, Chapter 7, 
points out, for $p>2$ prime, 
$${}_pE\subset U$$
(because, denoting $[n]P$ the $n$-multiple, $[2]P=[p]P=0$ implies $P=0$). Thus, ${}_pE$ is affine
and there exists an ideal $I\subset R_0$
with
\beg{eforc}{\mathcal{O}_{{}_pE}=R_0/I.}
Using \rref{er0}, we may express $u$ recursively as a power series in $x$ with coefficients in $R$, in
terms of which the group law of $E$ becomes a formal group law on $R$. Consequently, the formal
completion of $E$ at ${}_1E$ induces 
a canonical homomorphism of rings
\beg{efc}{\mathcal{O}_{{}_pE}\r R[[x]]/[p]x}
(where here $[n]x$ denotes formal multiple).
Strickland \cite{stricmult} concludes that there is a $\Z/p$-equivariant formal group law on $\mathcal{O}_{{}_{p}E}$.
From now on, we will assume that $p>3$ is prime.

\begin{theorem}\label{t1}
The ring $\mathcal{O}_{{}_pE}$ is finite and flat over $R$. Furthermore, the square (induced by \rref{eforc})
\beg{epullback}{\diagram
\mathcal{O}_{{}_pE}\rto\dto & x^{-1} \mathcal{O}_{{}_pE}\dto\\
R[[x]]/[p]x\rto & x^{-1} R[[x]]/[p]x
\enddiagram
}
is strictly Cartesian. (This means that it is a pullback, and the bottom row and right column are jointly onto.)
\end{theorem}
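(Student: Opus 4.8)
The plan is to analyze the structure of ${}_pE$ directly as a finite group scheme over $R$, using the fact (noted in the excerpt) that ${}_pE$ lies in the affine chart $U$ because $p>3$. First I would establish finiteness and flatness of $\mathcal{O}_{{}_pE}$ over $R$. The multiplication-by-$p$ map $[p]\colon E\to E$ is a finite flat isogeny of degree $p^2$ over the base; its kernel ${}_pE$ is therefore a finite flat group scheme of rank $p^2$ over $R$. Concretely, one can see flatness from the defining equation of the $p$-division polynomial: on the chart $U$, the ideal $I$ is generated (after inverting nothing — we are at a prime invertible on $R$) by the $p$-division polynomial $\psi_p(x)$, a polynomial in $x$ of degree $(p^2-1)/2$ with unit leading coefficient (times the element $x$ accounting for the origin, or rather the origin is cut out separately). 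Since $R$ is a regular (in fact polynomial localized) ring and $\mathcal{O}_{{}_pE}$ is presented as $R_0/I$ with $R_0$ finite free over $R[x]$ locally and $I$ generated by a monic-up-to-units polynomial, $\mathcal{O}_{{}_pE}$ is finite free, hence flat, over $R$. This is the routine part.

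The substantive part is showing the square \rref{epullback} is strictly Cartesian. The right-hand vertical map $x^{-1}\mathcal{O}_{{}_pE}\to x^{-1}R[[x]]/[p]x$ is the localization of \rref{efc}. The key geometric input is that $x$ is a function on ${}_pE$ whose vanishing locus is exactly the identity section ${}_1E$ (inside the chart $U$, where $x=X/Y$ and the identity is the unique point with $u=Z/Y=0$, and $x$ vanishes there), so $V(x)\cap {}_pE = {}_1E$. This means ${}_pE$ decomposes, as a \emph{set}, into the identity point and the complement $x^{-1}{}_pE$ of étale (away from $p$... but actually not — there is ramification at supersingular primes) nonzero $p$-torsion. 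To prove the pullback statement I would argue as follows: the map \rref{efc} identifies the formal completion of ${}_pE$ along ${}_1E$ with $R[[x]]/[p]x$; more precisely, the $x$-adic completion of $\mathcal{O}_{{}_pE}$ is $R[[x]]/([p]x)$, because the group law of $E$ near the origin, expressed in the coordinate $x$, gives exactly the formal group law whose $p$-series is $[p]x$, and ${}_1E$ is the zero locus of $x$ on $\mathcal{O}_{{}_pE}$. So the square is of the form
\beg{enuff}{\diagram
A \rto\dto & A[x^{-1}]\dto\\
\hat{A}_x \rto & \hat{A}_x[x^{-1}]
\enddiagram}
for $A=\mathcal{O}_{{}_pE}$, and the claim reduces to the standard arithmetic-fracture / Beauville–Laszlo style statement that for a ring $A$ and a non-zero-divisor $x$ (one must check $x$ is a non-zero-divisor on $\mathcal{O}_{{}_pE}$, or handle the kernel — this is where finiteness/flatness over $R$ and the structure of the division polynomial get used), the square \rref{enuff} is a pullback and the two maps to $\hat{A}_x[x^{-1}]$ are jointly surjective. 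Checking that $x$ acts without nilpotents/zero-divisors issues: since $\mathcal{O}_{{}_pE}$ is finite flat over $R$ and ${}_pE\to \mathrm{Spec}\,R$ has the origin section as a relative Cartier divisor cut out by $x$ (the division polynomial $\psi_p$ has $x$ as a factor of exact multiplicity one in the appropriate sense, $p$ odd), $x$ is a non-zero-divisor on $\mathcal{O}_{{}_pE}$, and $\mathcal{O}_{{}_pE}/(x^n)$ stabilizes appropriately so that $x$-adic completion is exact.

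Assembling: strict Cartesianness means (i) pullback and (ii) joint surjectivity of the right column and bottom row onto the lower-right corner. For (ii), surjectivity of $x^{-1}\mathcal{O}_{{}_pE}\to x^{-1}R[[x]]/[p]x$ follows because any element of the target can be multiplied by a power of $x$ to land in $R[[x]]/[p]x$, which is the $x$-adic completion of $\mathcal{O}_{{}_pE}$, and one uses that $\mathcal{O}_{{}_pE}$ is already $x$-adically separated modulo its $x$-torsion-free part... — more cleanly, since $\mathcal{O}_{{}_pE}$ is a finite $R$-module, it is $x$-adically complete along the identity section component, so the map $\mathcal{O}_{{}_pE}\to R[[x]]/[p]x$ is \emph{surjective}, which gives (ii) immediately after inverting $x$; and (i) then follows formally from surjectivity of $\mathcal{O}_{{}_pE}\to \hat{A}_x=R[[x]]/[p]x$ together with the identification of the kernel of that map with the ideal of functions supported away from ${}_1E$, i.e. the part of $\mathcal{O}_{{}_pE}$ that maps isomorphically to a complementary factor. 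I expect the main obstacle to be precisely this last structural point: verifying that $\mathcal{O}_{{}_pE}\to R[[x]]/[p]x$ is surjective with the expected kernel — equivalently, that ${}_pE$ really is, scheme-theoretically (not just set-theoretically), the disjoint union of the formal neighborhood of ${}_1E$ (= $\mathrm{Spec}\,R[[x]]/[p]x$) glued along $x^{-1}$ with the "away from identity" part. This is where one genuinely uses $p>3$ (so that ${}_pE\subset U$, avoiding the $2$-torsion complications) and the explicit recursive expression of $u$ as a power series in $x$ from \rref{er0}, which makes the formal completion along ${}_1E$ literally equal to $R[[x]]/([p]x)$ rather than merely pro-isomorphic to it.
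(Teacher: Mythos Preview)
Your overall strategy is sound and genuinely different from the paper's. You deduce finite flatness from the fact that $[p]\colon E\to E$ is a degree-$p^2$ isogeny, and then recognize \rref{epullback} as the completion--localization fracture square
\[
\diagram
A\rto\dto & A[x^{-1}]\dto\\
\hat A_{(x)}\rto & \hat A_{(x)}[x^{-1}]
\enddiagram
\]
for $A=\mathcal{O}_{{}_pE}$, with $\hat A_{(x)}=R[[x]]/[p]x$ and $x$ a non-zero-divisor. Invoking Beauville--Laszlo (or Greenlees' Tate square, which the paper itself cites in the Comment following the theorem and again in Section~\ref{szpr}) then gives the result. By contrast, the paper constructs the pullback $\mathcal{O}'_{{}_pE}$ directly, then proves $\mathcal{O}_{{}_pE}\to\mathcal{O}'_{{}_pE}$ is an isomorphism by localizing at maximal ideals of $R$ and invoking the classification of $p$-torsion of elliptic curves over finite fields, together with Nakayama and the Nullstellensatz. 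Your route is more conceptual and avoids the case analysis; the paper's route is more elementary in that it does not appeal to a general gluing theorem.

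There is, however, a genuine error in your ``Assembling'' paragraph. You claim that since $\mathcal{O}_{{}_pE}$ is a finite $R$-module it is $x$-adically complete along the identity component, so that $\mathcal{O}_{{}_pE}\to R[[x]]/[p]x$ is \emph{surjective}, and you then use this surjectivity to deduce both (i) and (ii). This surjectivity is false. Finiteness over $R$ says nothing about completeness with respect to the ideal $(x)\subset\mathcal{O}_{{}_pE}$, which does not come from an ideal of $R$. In fact $R[[x]]/[p]x$ is not finitely generated as an $R$-module: its fibre has rank $1$ at characteristic-$0$ points, rank $p$ at ordinary points, and rank $p^2$ at supersingular points, and the same phenomenon that makes $\Z[[x]]/(px)\cong\Z\times\prod_{n\geq 1}\Z/p$ non-finitely-generated over $\Z$ occurs here. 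Hence the rank-$p^2$ free $R$-module $\mathcal{O}_{{}_pE}$ cannot surject onto it.

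The fix is simply to follow through on what you already wrote earlier: cite the fracture/Beauville--Laszlo statement as a black box once you have checked that $x$ is a non-zero-divisor and that $\hat A_{(x)}=R[[x]]/[p]x$. Joint surjectivity then comes from the standard observation (which is exactly what the paper uses) that any element of $\hat A_{(x)}[x^{-1}]$ is a sum of an element of $\hat A_{(x)}$ and a Laurent polynomial in $x$, the latter lying in the image of $A[x^{-1}]$; this requires no surjectivity of $A\to\hat A_{(x)}$.
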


\vspace{3mm}
{\bf Comment:} Note that \rref{epullback} is the Tate square in local homology
of the commutative ring $\mathcal{O}_{{}_pE}$
in the sense of Greenlees \cite{greenleestate} with respect to the ideal $(x)$. This square has been also used
in \cite{strick,hu} in connection with calculations of coefficient rings of equivariant formal group laws.

\begin{proof}
Denote by $\mathcal{O}^\prime_{{}_pE}$ the pullback of the three terms of \rref{epullback} other than 
the upper left corner. Then we have a canonical homomorphism of rings
\beg{ech}{\mathcal{O}_{{}_pE}\r \mathcal{O}^\prime_{{}_pE}.
}
Our task is to show that it is an isomorphism. First notice that the right hand column and bottom row of 
\rref{epullback} are jointly onto. This is because $x^{-1}R[[x]]$ is generated by $R[[x]]$ and $R[x^{-1}]$.
Thus, we have a short exact sequence
\beg{ech1}{
0\r \mathcal{O}^\prime_{{}_pE}\r x^{-1}\mathcal{O}_{{}_pE}\oplus
R[[x]]/[p]x\r x^{-1}R[[x]]/[p]x\r 0.
}
Now notice that \rref{ech} is obviously an isomorphism after inverting $x$. Next, we will show that it is
an isomorphism modulo $x$. To this end, map \rref{ech1} to itself by $x$, and apply the snake lemma.
The kernel and cokernel of the right column are $0$, and the cokernel of the middle column is
$R$. Thus, we obtain
$$\mathcal{O}^\prime_{{}_pE}/(x)=R,$$
as claimed. Since \rref{ech} is an isomorphism at $x$ and away from $x$, by Zariski's main theorem, it is
locally finite. (Note that by a similar argument, $\mathcal{O}_{{}_{p}E}$ is locally finite over $Spec(R)$.)
Now to show that \rref{ech} is an isomorphism, it suffices to prove that it is an isomorphism when localized
at a maximal ideal $m$ of $R$ (using the Nullstellensatz). Note that $m$ will contain a prime number $\ell$
and the non-trivial case is when $\ell=p$. Then the curve $E$ has good reduction at $m$, which can be
ordinary or supersingular. In either case, \rref{ech} is an isomorphism modulo $m$ by the theory of elliptic 
curves over finite fields. Thus, \rref{ech} is onto locally at $m$ by Nakayama's lemma. To show that \rref{ech}
is injective locally at $m$, suppose that $z$ is in its kernel. Then, by the Nullstellensatz, $x^n$ is divisible by 
$z$ for some $n\in \N$. In particular, for all $N\in \N$, 
\beg{efalse}{x^n=0\in R_{m}[[x]]/([p]x,x^N).}
The right hand side is calculable, and \rref{efalse} is false. Thus, \rref{ech} is an isomorphism locally, and
hence an isomorphism. 

It remains to show that $\mathcal{O}_{{}_pE}$ is finite flat over $R$. This is equivalent to showing that
for a maximal ideal $m$ of $R$, $(\mathcal{O}_{{}_pE})_m$ is a free $R_m$-module. Assuming again 
$\ell=p$, modulo $m$, it is, 
in fact, a free module on $p^2$ generators by the theory of elliptic curves over finite fields. Thus, 
$(\mathcal{O}_{{}_pE})_m$ is generated by $p^2$ elements as a module over $R_m$ by Nakayama's
lemma. If $z$ was a relation between these generators, then again $z$ becomes $0$ after inverting $x$,
so by the Nullstellensatz, $x^n$ is again divisible by $z$ for some $n$. But again, $x^n$ is not $0$ in 
$(\mathcal{O}_{{}_pE})_m$ for any $n\in \N$ (because it is not $0$ in the formal completion; note that it
{\em is} $0$ modulo $p$ for large enough $n$).
\end{proof}

\vspace{3mm}

\section{Construction of a $\Z/p$-equivariant ring spectrum}\label{sequivconst}

Our next aim is to realize the diagram \rref{epullback} in the category of (non-rigid) $\Z/p$-equivariant
commutative associative unital ring spectra. First, if $MU_*=L\r R$ is the homomorphism of rings
classifying the formal group law on $R$ (the elements $g_2$, $g_3$ are put in dimensions $4$, $6$, 
respectively), then by Landweber's exact functor theorem, 
$$\mathcal{E}_*X= MU_*X\otimes_{MU_*} R$$
defines a homology theory represented by a commutative associative ring spectrum $\mathcal{E}$
(see for example \cite{ahs}). 

\vspace{3mm}
Alternately, one can also use the following fact:

\begin{lemma}\label{lgci}
The classification map 
\beg{eclassif}{L\r R
}
of the formal group law on $R$ coming from the elliptic curve $E$, where $L$ is the Lazard ring, has
\beg{eclassif1}{x_1,x_2,x_3,x_5\mapsto 0,\; x_4\mapsto 8g_2,\; x_6\mapsto 48 g_3.
}
Thus, there exists a (rigid) $MU$-module spectrum $\mathcal{E}$ (in the sense of
\cite{ekmm}) realizing the module struture on 
coefficients given by \rref{eclassif}.
\end{lemma}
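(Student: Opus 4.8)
The plan is to compute the coefficients $x_1,\dots,x_6$ of the formal group law of $E$ directly from the Weierstrass equation, and then invoke the standard $MU$-module realization machinery. Recall that the Lazard-ring generators $x_i$ can be taken so that the universal FGL satisfies $F(X,Y)\equiv X+Y+\sum_{i+j\ge 2}a_{ij}X^iY^j$ with a fixed normalization in which $x_i$ is, up to a unit, the coefficient of $X^iY$ (or of $X^{i+1}-$type terms) in low degrees; concretely, in the standard presentation one has $x_i = a_{1,i}$ appropriately symmetrized, and the FGL coming from the completion of $E$ at the origin is computed from \rref{er0} by solving recursively for $u = u(x)$ as a power series, then reading off the group law $x(P+Q)$ in the local coordinate $x = X/Y$.

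First I would carry out the recursive solution of \rref{er0}: writing $u = 4x^3 - g_2 x u^2 - g_3 u^3$ and iterating, one gets $u = 4x^3 + O(x^5)$, then $u = 4x^3 - 16 g_2 x^7 + O(x^9)$, etc. — the point being that $u$ is an odd power series in $x$ beginning in degree $3$, with leading terms controlled by $g_2,g_3$ only through specific integer multiples. Substituting into the addition formula for the Weierstrass curve (the chord-and-tangent law, expressed in the coordinates $(x,u)$) and expanding to the required order gives the FGL $F(X,Y)$ through terms of total degree $\le 7$. Comparing with the universal FGL in the chosen normalization of the $x_i$ then yields \rref{eclassif1}: the odd parity of $u(x)$ forces the odd-index classes $x_1,x_3,x_5$ to vanish, a mild further check (using that there is no $x^2 u$-type contribution and that the degree-$2$ part of $F$ is forced to be $0$ by the group axioms once $x_1 = 0$) kills $x_2$, and the degree-$4$ and degree-$6$ coefficients come out to $8g_2$ and $48g_3$ respectively. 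The numerical factors $8$ and $48$ are exactly what one obtains from tracking the $4x^3$ in \rref{er0} through two, resp. three, stages of the recursion together with the combinatorial coefficients in the addition formula; this matches Strickland's normalization in \cite{stricmult}.

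Having established \rref{eclassif1}, the second half is immediate from the structure theory of $MU$-modules: since $x_1,x_2,x_3,x_5$ map to $0$ and $x_4\mapsto 8g_2$, $x_6\mapsto 48 g_3$ with $8,48$ invertible in $\Z[1/6]$, the map $L = MU_*\to R$ exhibits $R$ as obtained from $MU_*$ by killing a regular sequence (the $x_i$ for $i\ne 4,6$, together with the remaining polynomial generators in degrees $>6$, all of which map to elements of the polynomial ring $R$) and then inverting elements ($6$ and $\Delta$). Each of these operations — quotient by a non-zero-divisor, localization — is realizable on the level of $MU$-module spectra in the sense of \cite{ekmm} (via cofiber sequences $\Sigma^{|x_i|}M \xrightarrow{x_i} M \to M/x_i$ and telescopes for localization), and the resulting $MU$-module $\mathcal{E}$ has $\pi_*\mathcal{E} = R$ with the prescribed module structure. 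Since the sequence being quotiented is regular, there are no higher $\mathrm{Tor}$ obstructions and $\mathcal{E}$ is, moreover, flat as an $MU_*$-module, so its homotopy type is determined.

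The main obstacle is purely computational: keeping track of the exact integer coefficients through the two- or three-fold recursion in \rref{er0} and the Weierstrass addition formula, since an arithmetic slip would change the claimed values $8g_2$ and $48g_3$. Everything else — the vanishing of the odd classes from the parity of $u(x)$, and the $MU$-module realization — is formal. One convenient check is to compare against the known additive reduction modulo the ideal $(g_2,g_3)$ and against the classical fact that the FGL of $E$ base-changed to $\Z[1/6][g_2,g_3]$ is Landweber-exact, which is consistent with $x_4,x_6$ generating (after the unit rescaling) a polynomial subring over which everything is flat.
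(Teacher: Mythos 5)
Your overall strategy --- compute the FGL coefficients from the Weierstrass equation and then realize $R$ by killing a regular sequence in $MU[1/6]_*$ and localizing --- matches the paper's proof, but your description of the regular sequence has a gap. You propose to kill ``the $x_i$ for $i\ne 4,6$, together with the remaining polynomial generators in degrees $>6$,'' but those higher-degree generators do \emph{not} map to zero under $L\to R$: the Weierstrass FGL has nonvanishing coefficients in high degree, with images that are nonzero polynomials in $g_2,g_3$. Killing them unmodified would yield a strict quotient of $\Z[1/6][g_2,g_3]$, not $R$. The missing step is to replace each $x_i$ with $i>6$ by $x_i-\phi_i(x_4,x_6)$, where $\phi_i$ is a polynomial in $x_4,x_6$ mapping to the image of $x_i$ in $R$; such $\phi_i$ exist because $\Z[1/6][x_4,x_6]\to\Z[1/6][g_2,g_3]$ is already an isomorphism once $8$ and $48$ are units. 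Killing $x_1,x_2,x_3,x_5$ together with these modified classes then gives $\Z[1/6][g_2,g_3]$, and inverting $\Delta$ gives $R$. (This is the kind of re-choice of generators that the paper's comment about indeterminacy is also implicitly using.)

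A secondary error: the claim that $\mathcal{E}$ is flat as an $MU_*$-module is false. Since $R$ is annihilated by $x_1,x_2,x_3,x_5$, which are non-zero-divisors on $MU_*$, it cannot be flat. The correct and weaker property is that the FGL on $R$ is Landweber exact, which underlies the alternate construction via the exact functor theorem that the paper mentions before this lemma; but Landweber exactness is not what the killing-a-regular-sequence route relies on, so the flatness assertion should simply be dropped from your argument.
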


\begin{proof}
The relations \rref{eclassif1} are proved by direct computation. Therefore, a spectrum with coefficients
$\Z[1/6][g_2,g_3]$ can be obtained by killing, in the sense
of \cite{ekmm}, a regular sequence in the coefficients of the $E_\infty$ ring
spectrum $MU[1/6]$. Inverting $\Delta=g_2^3-27g_3^2$ gives the desired result.
\end{proof}

\vspace{3mm}
\noindent
{\bf Comment:} Note that the statement does not depend on the choices of the polynomial generators
of $MU_*$, since they are determined up to decomposables, and the indeterminacy, in the current case,
goes to $0$.

\vspace{3mm}

By Lemma \ref{lgci}, we then see that $\mathcal{E}\wedge \mathcal{E}$ splits as a coproduct of
suspensions of $\mathcal{E}$ as (rigid) $MU$-modules (the generators being induced, using 
functoriality of killing of regular sequences in modules in the sense of \cite{ekmm}, by maps of
rigid $MU$-modules
$$MU\r MU\wedge \mathcal{E}$$
given by homotopy classes in the target which are lifts of monomials in the Weierstrass strict isomorphism
parameters).

Using this, we can then realize the ring structure on $\mathcal{E}$ as a splitting of the coproduct 
in the category of rigid $MU$-modules which induces the ring structure on coefficients. Since all the maps
involved in the commutativity, associativity and unit diagrams then also lift to analogous maps for $MU$, 
one can then deduce the commutativity of those diagrams, in the derived category of rigid $MU$-modules,
from functoriality of killing a regular sequence in $MU$-modules.

\vspace{3mm}

Let us now consider the $\Z/p$-equivariant question.
The bottom row of diagram \rref{epullback} can be constructed
as Borel and Tate cohomology of $\mathcal{E}$ (with fixed $\Z/p$-action). To construct the right hand
column, denote by $\gamma$ the reduced regular (complex) representation of $\Z/p$. 
Then identify $S^{\infty\gamma}$ with its suspension spectrum. This is a $\Z/p$-equivariant $E_\infty$-ring
spectrum. The following is a well known fact (see for example \cite{lms}, Sections II 8,9).

\begin{lemma} \label{lgeom}
The fixed point functor and $?\wedge S^{\infty\gamma}$ induce inverse equivalences of categories
between the derived category of rigid $\Z/p$-equivariant $S^{\infty\gamma}$-modules and 
spectra. Moreover, with suitable models, these functors preserve the smash product rigidly.
\end{lemma}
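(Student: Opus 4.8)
The plan is to realize the claimed equivalence as an instance of the standard "geometric fixed points via $EF[e]$" machinery, specialized to the family consisting only of the trivial subgroup. First I would recall that for $G = \Z/p$, the space $S^{\infty\gamma}$ is a model for $\widetilde{E\mathcal{P}}$, where $\mathcal{P}$ is the family of proper (here: trivial) subgroups: indeed $(S^{\infty\gamma})^{\Z/p} = S^0$ while $(S^{\infty\gamma})^{e} = S^\infty \simeq *$, and these fixed-point conditions characterize $\widetilde{E\mathcal{P}}$ up to $G$-equivalence. Smashing with $\widetilde{E\mathcal{P}}$ is a smashing localization (it is idempotent since $\widetilde{E\mathcal{P}}\wedge\widetilde{E\mathcal{P}}\simeq\widetilde{E\mathcal{P}}$, which for $\Z/p$ follows from $S^{\infty\gamma}\wedge S^{\infty\gamma}\cong S^{\infty(\gamma\oplus\gamma)}$ and cofinality of the indexing), so the homotopy category of $\widetilde{E\mathcal{P}}$-modules is the localizing subcategory of $\Z/p$-spectra consisting of the $\widetilde{E\mathcal{P}}$-local objects, i.e. those $X$ with $X\wedge\widetilde{E\mathcal{P}}\simeq X$, equivalently those that are "concentrated over the fixed points."

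Next I would invoke the basic fact (the content of \cite{lms}, Sections II.8--9) that for $X$ an $\widetilde{E\mathcal{P}}$-module, the categorical $\Z/p$-fixed-point spectrum $X^{\Z/p}$ agrees with the geometric fixed points $\Phi^{\Z/p}X$, and that $\Phi^{\Z/p}$, applied to $\widetilde{E\mathcal{P}}$-modules, is an equivalence onto nonequivariant spectra with inverse given by inflation followed by $?\wedge S^{\infty\gamma}$. Concretely, one checks: (i) for any nonequivariant spectrum $Y$, with $i^*Y$ its inflation to a $\Z/p$-spectrum with trivial action, one has $(i^*Y\wedge S^{\infty\gamma})^{\Z/p}\simeq Y$ --- this is the computation $(i^*Y\wedge S^{\infty\gamma})^{\Z/p}\simeq Y\wedge (S^{\infty\gamma})^{\Z/p}=Y\wedge S^0=Y$, using that inflation commutes with fixed points on the nose and that smashing with the cofibrant $S^{\infty\gamma}$ commutes with fixed points because $S^{\infty\gamma}$ is built from cells $G/G_+\wedge S^n$ induced up from representation spheres whose fixed points are the relevant spheres; and (ii) for $X$ an $\widetilde{E\mathcal{P}}$-module, $i^*(X^{\Z/p})\wedge S^{\infty\gamma}\simeq X$ --- here one uses the isotropy separation cofiber sequence $E\mathcal{P}_+\wedge X\to X\to \widetilde{E\mathcal{P}}\wedge X=X$, so $E\mathcal{P}_+\wedge X\simeq *$, i.e. $X$ has trivial geometric fixed points at $e$, hence the counit $i^*\Phi^{\Z/p}X\wedge S^{\infty\gamma}\to X$ is an equivalence on all fixed points and therefore a $\Z/p$-equivalence.

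For the multiplicative (last) sentence I would point to the fact that $S^{\infty\gamma}$ is not merely a spectrum but a $\Z/p$-equivariant $E_\infty$-ring (being the suspension spectrum of the $E_\infty$-space $S^{\infty\gamma}=\mathrm{colim}\, S^{n\gamma}$, a commutative monoid under the evident addition of representations), so the functor $?\wedge S^{\infty\gamma}$ is lax symmetric monoidal and the equivalence upgrades to one of symmetric monoidal homotopy categories; the required "suitable models" are exactly those (e.g. in the framework of \cite{ekmm} extended equivariantly, or orthogonal $\Z/p$-spectra) in which $S^{\infty\gamma}$ is a strictly commutative ring and $\Phi^{\Z/p}$ is a strong symmetric monoidal left Quillen functor --- then $\Phi^{\Z/p}(X\wedge_{S^{\infty\gamma}}X')\simeq \Phi^{\Z/p}X\wedge\Phi^{\Z/p}X'$ on the nose. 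The main obstacle, and the only point requiring care rather than citation, is verifying that $S^{\infty\gamma}$-modules in the chosen model category really are closed under the relevant constructions and that $\Phi^{\Z/p}$ is monoidal at the point-set level rather than merely up to homotopy; once the model is pinned down this is standard, and for the present paper it suffices to quote \cite{lms} for the underlying equivalence and note the monoidal refinement.
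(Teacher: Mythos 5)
Your argument is precisely the unpacking of \cite{lms}, Sections II.8--9, that the paper's (proofless) citation points to: identify $S^{\infty\gamma}$ with $\widetilde{E\mathcal{P}}$, note that Lewis--May fixed points agree with geometric fixed points on $\widetilde{E\mathcal{P}}$-modules, check the unit and counit of the $(\Phi^{\Z/p},\, i^*(-)\wedge S^{\infty\gamma})$ adjunction, and upgrade monoidally using that $S^{\infty\gamma}$ is a strictly commutative ring in a good model. One parenthetical justification is garbled --- $S^{\infty\gamma}$ is \emph{not} built from free cells $G/G_+\wedge S^n$ (those have trivial $G$-fixed points; it is $E\mathcal{P}_+$ that is built from them, and $\widetilde{E\mathcal{P}}$ is the cofiber, a colimit of representation spheres $S^{n\gamma}$), and Lewis--May fixed points do not commute with smash products in general --- but the conclusion $\Phi^{\Z/p}(i^*Y\wedge S^{\infty\gamma})\simeq Y$ is the standard fact $\Phi^G\circ i^*\simeq \mathrm{id}$ and does not depend on that parenthetical, so the proof stands.
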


\qed

\vspace{3mm}
Now after applying fixed points, the right hand column of the diagram \rref{epullback} is realized by a
ring map of Landweber-exact spectra due to the fact that $\mathcal{O}_{{}_pE}$ is a finitely generated 
projective $R$-module. Thus, we have a diagram of (non-rigid) commutative associative $\Z/p$-equivariant
ring spectra 
\beg{epull1}{\diagram
& S_1\dto\\
S_2\rto &S_3
\enddiagram
}
realizing, on $\Z/p$-coefficients the corresponding part of Diagram \ref{epullback}. (Note that on $\{e\}$-coefficients, 
the right hand column is $0$, and the bottom left term is $R$.) We want to argue that the homotopy limit
of \rref{epull1} is a commutative associative ring spectrum. To this end, note that the homotopy pullback is
weakly symmetric monoidal, so we have a canonical natural map from the smash product of two copies of 
the homotopy pullback of \rref{epull1}
to the homotopy pullback of
\beg{epull2}{\diagram
& S_1\wedge S_1\dto\\
S_2\wedge S_2\rto &S_3\wedge S_3.
\enddiagram
}
Now, there is certainly a map from the homotopy pullback of \rref{epull2} to the homotopy pullback of
\rref{epull1}, since a homotopy commutative map of pullback diagrams induces a map on homotopy pullbacks.
Furthermore, the indeterminacy of choosing such a map can be measured in the first (hence odd-degree)
 cohomology groups
$$S_3^1(S_1\wedge S_1),\;S_3^1(S_2\wedge S_2),$$
which are $0$.
This, together with a similar observation of the triple smash product analogue of \rref{epull2} implies that
the homotopy pullback of the diagram \rref{epull1} is a $\Z/p$-equivariant commutative associative ring 
spectrum $\mathcal{E}_{\Z/p}$. 
Note that the $\Z/p$-fixed points of $\mathcal{E}_{\Z/p}$ have coefficients $\mathcal{O}_{{}_pE}$
by Theorem \ref{t1}, and non-equivariant coefficients $R$, related by the canonical ring homomorphisms.

\vspace{3mm}

\section{Complex orientation. Equivariant formal groups.}\label{sco}

There exists a map, in the homotopy category of $\Z/p$-equivariant spectra, of the diagram
\beg{emudiag}{\diagram
& S^{\infty \gamma}\wedge MU_{\Z/p}\dto\\
F(E\Z/p_+,MU)\rto & S^{\infty\gamma}\wedge F(E\Z/p_+, MU)
\enddiagram}
into the diagram \rref{epull1} (which, on coefficients, realizes the diagram \rref{epullback}). Furthermore,
the diagram consists of maps of (non-rigid) ring spectra, and passes to a map of (non-rigid) ring spectra
after taking homotopy pullbacks. All these statements follow exactly by the same method as
the proof of the ring structure on $\mathcal{E}_{\Z/p}$. 

Perhaps a little more detail is due only in describing the map at the upper right corner. Recall \cite{zp} that
\beg{ephimu}{(S^{\infty\gamma}\wedge MU_{\Z/p})_*=MU_*[u_\alpha,u_\alpha^{-1},b_i^\alpha
\mid \alpha\in \Z/p^*\smallsetminus\{0\},\; i\in \N].
}
Denoting 
$$F(E\Z/p_+,MU)_*=MU_*[[x]]/[p]x,$$
under the right hand vertical arrow \rref{emudiag}, 
$$u_\alpha\mapsto [\alpha]x,$$
$$b_i^\alpha\mapsto \text{coeff}_{z^i}(z+_F[\alpha]x).$$
To realize the map from the upper right corner of \rref{emudiag} to the upper right corner of \rref{epull1},
it suffices to specify  where the classes $u_\alpha$, $b_i^\alpha$ go on coefficients. This is because 
(after using Lemma \ref{lgeom}),
\rref{ephimu} is realized by a wedge sum of copies of $MU$, the map on $MU$ is determined by classifying
the formal group law, and the map of (non-rigid) commutative ring spectra is uniquely determined by 
mapping the generators.

To lift the generators to $x^{-1}\mathcal{O}_{{}_pE}$, it suffices to consider the comultiplication
$$\mathcal{O}_U\r\mathcal{O}_U\otimes_R\mathcal{O}_{{}_pE},$$
and complete at the quotient morphism
$$\mathcal{O}_{{}_pE}\r \mathcal{O}_{{}_pE}\otimes_R \mathcal{O}_{{}_pE}.$$

\vspace{3mm}
We conclude that the spectrum $\mathcal{E}_{\Z/p}$
is complex-oriented, and thus possesses an equivariant formal group law (since the pullback of
Diagram \rref{emudiag} is $MU_{\Z/p}$, see \cite{zp}).

\begin{theorem}\label{tefgl}
We have
\beg{etefgl1}{\mathcal{E}_{\Z/p}^*(\C P^\infty_{\Z/p})=\mathcal{O}_{E^{\wedge}_{{}_pE}}\otimes_R 
\mathcal{O}_{{}_pE}
}
where the right hand side denotes the coefficient ring of the formal completion of $E$ at ${}_pE$ (an affine
formal scheme). Additionally, the following diagram of rings is strictly Cartesian:
\beg{etefgl2}{\diagram
\mathcal{O}_{E^{\wedge}_{{}_pE}}\otimes_R \mathcal{O}_{{}_pE}\rto\dto &\displaystyle\prod_{\alpha\in \Z/p^*}(x^{-1}\mathcal{O}_{{}_pE}
)[[z_\alpha]]
\dto \\
R[[x,z]]/[p]x\rto & \displaystyle \prod_{\alpha\in \Z/p^*} (x^{-1}R[[x]]/[p]x)[[z_\alpha]].
\enddiagram
}
Here $\Z/p^*$ denotes
the group of characters of $\Z/p$.
The left vertical arrow of \rref{etefgl2}
sends the coordinate on $E$ given by the element $x$ of \rref{er0} to $z$ (a conflict
of notation is caused here by the fact that we already denoted by $x$ the same coordinate on $\mathcal{O}_{{}_pE}$).
We put
\beg{ezaza}{z_\alpha=z+_F [\alpha]x.}
The horizontal maps are products, over $\alpha$, of the maps given by \rref{ezaza}.
\end{theorem}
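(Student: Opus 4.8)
The plan is to identify both sides of \rref{etefgl1} with the homotopy pullback of the diagram obtained by smashing \rref{emudiag} with $\C P^\infty_{\Z/p}$ and evaluating cohomology, and then to translate that homotopy pullback into the algebraic pullback square \rref{etefgl2}. Concretely, since the homotopy pullback of \rref{emudiag} is $MU_{\Z/p}$ and the homotopy pullback of \rref{epull1} is $\mathcal{E}_{\Z/p}$, and since the odd cohomology groups that controlled the ring structure vanish (exactly as in the construction of $\mathcal{E}_{\Z/p}$), applying $(-)^*(\C P^\infty_{\Z/p})$ commutes with the homotopy pullback in the relevant range, so that $\mathcal{E}_{\Z/p}^*(\C P^\infty_{\Z/p})$ is computed as the algebraic pullback of the three corners of \rref{etefgl2} other than the upper left. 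So the first step is to compute those three corners. The bottom-left corner is $F(E\Z/p_+,\mathcal{E})^*(\C P^\infty) = R[[x,z]]/[p]x$ by the usual Borel cohomology computation for the complex-oriented $\mathcal{E}$; the upper-right corner comes from the geometric fixed point formula \rref{ephimu} together with the description of where $u_\alpha, b_i^\alpha$ map, base-changed along $MU_*\to x^{-1}\mathcal{O}_{{}_pE}$, giving $\prod_{\alpha\in\Z/p^*}(x^{-1}\mathcal{O}_{{}_pE})[[z_\alpha]]$; and the bottom-right is the same formula base-changed along $MU_*\to x^{-1}R[[x]]/[p]x$.

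The second step is to recognize that the algebraic pullback of these three corners is $\mathcal{O}_{E^\wedge_{{}_pE}}\otimes_R\mathcal{O}_{{}_pE}$. The cleanest route is to observe that the square \rref{etefgl2} is, after taking the product over $\alpha$ apart and working one $\alpha$ at a time (or, better, recognizing it as a product of ``Tate squares''), obtained from the strictly Cartesian square \rref{epullback} of Theorem \ref{t1} by the base change $-\otimes_R R[[z]]$ applied to the formal completion along $x$; since base change along the flat map $R\to R[[z]]$ and completion preserve strict Cartesianness, and since $\mathcal{O}_{E^\wedge_{{}_pE}}$ is exactly $\mathcal{O}_{{}_pE}$ completed along the ideal $(x)$ — using that $\mathcal{O}_{{}_pE}$ is finite flat over $R$ by Theorem \ref{t1}, so $\mathcal{O}_{E^\wedge_{{}_pE}} = \liml_N \mathcal{O}_{{}_pE}/(x^N)$ — we get that the pullback in \rref{etefgl2} is $\mathcal{O}_{E^\wedge_{{}_pE}}\otimes_R\mathcal{O}_{{}_pE}$, where the ``extra'' $z$-variable records the coordinate on the first factor $\mathcal{O}_{E^\wedge_{{}_pE}}$, i.e. the formal coordinate, while the variable $x$ in $\mathcal{O}_{{}_pE}$ is the torsion coordinate. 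The identification $z_\alpha = z+_F[\alpha]x$ is forced: the $\alpha$-component of the right vertical arrow is, by \rref{emudiag}, translation by $[\alpha]x$, so the coordinate pulled back from the $\alpha$-sheet of $x^{-1}\mathcal{O}_{{}_pE}$ is $z+_F[\alpha]x$, and strict Cartesianness then glues these into a single well-defined element of the pullback.

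The main obstacle, and the step deserving the most care, is the first one: justifying that $(-)^*(\C P^\infty_{\Z/p})$ applied to the homotopy pullback \rref{emudiag}$\to$\rref{epull1} yields the algebraic pullback of the resulting diagram of $MU_{\Z/p}^*$- and $\mathcal{E}_{\Z/p}^*$-algebras, rather than merely a Mayer--Vietoris long exact sequence with a possibly nonzero connecting map. This requires checking that the relevant $\lim^1$ term vanishes and that the bottom row and right column of the cohomology square remain jointly surjective — the latter being the strict Cartesianness assertion itself. For joint surjectivity I would argue exactly as in the proof of Theorem \ref{t1}: $x^{-1}R[[x]]$ is generated by $R[[x]]$ together with $R[x^{-1}]$, and this persists after adjoining the formal variables $z_\alpha$ and base-changing to $\mathcal{O}_{{}_pE}$; for the $\lim^1$ and exactness I would use that all the spectra involved are complex-oriented with even coefficients, so that $MU_{\Z/p}^*(\C P^\infty_{\Z/p})$ and the analogous groups are concentrated in even degrees and are (pro-)free, killing the obstruction groups — this is the same ``odd cohomology vanishes'' mechanism invoked repeatedly in Section \ref{sequivconst}. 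Once this is in place, the rest is bookkeeping with completions and base change. \qed
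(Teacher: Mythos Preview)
Your overall architecture matches the paper's: both compute the three corners of \rref{etefgl2} via Borel cohomology, geometric fixed points, and Tate cohomology, then identify the square as arising from applying $F((\C P^\infty_{\Z/p})_+,?)$ to the Tate diagram of $\mathcal{E}_{\Z/p}$. However, there are two genuine gaps in your execution.

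First, your identification of $\mathcal{O}_{E^{\wedge}_{{}_pE}}$ is wrong. You write that it equals $\displaystyle\lim_{\leftarrow N}\mathcal{O}_{{}_pE}/(x^N)$, i.e.\ the $(x)$-adic completion of $\mathcal{O}_{{}_pE}$. But $\mathcal{O}_{{}_pE}$ is finite flat over $R$, so that completion is again finite over $R$; whereas $\mathcal{O}_{E^{\wedge}_{{}_pE}}$ is the completion of the \emph{curve} $E$ (equivalently of $\mathcal{O}_U$) along the ideal of ${}_pE$, an object of infinite rank over $R$. Consequently your ``base change $-\otimes_R R[[z]]$'' description of the square \rref{etefgl2} cannot be correct as stated: the horizontal maps involve the substitutions $z\mapsto z_\alpha=z+_F[\alpha]x$, and the single variable $z$ in the lower left does not become a product of independent formal variables under a flat base change. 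The paper handles \rref{etefgl1} quite differently: it first uses the complex orientation to see that $\mathcal{E}_{\Z/p}^*(P(V))$ is free over $\mathcal{O}_{{}_pE}$ on the monomials $1,z_{\alpha_1},z_{\alpha_1}z_{\alpha_2},\dots$, and then invokes the fact that $\mathcal{O}_{{}_pE}$ is a locally complete intersection (locally $\mathcal{O}_U/(q)$ with $(q^n)/(q^{n+1})\cong\mathcal{O}_{{}_pE}$) to match this inverse system with the defining filtration of $\mathcal{O}_{E^{\wedge}_{{}_pE}}$.

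Second, your joint-surjectivity argument (``$x^{-1}R[[x]]$ is generated by $R[[x]]$ and $R[x^{-1}]$, and this persists after adjoining the $z_\alpha$'') is too quick. The bottom-left corner lives in a single variable $z$, the right-hand corners in a product over $\alpha$ of power series in $z_\alpha$, and the relation $z_\alpha=z+_F[\alpha]x$ makes the passage between them nontrivial. The paper proves surjectivity by an iterative procedure: given $u_0\in\prod_\alpha (x^{-1}R[[x]]/[p]x)[[z_\alpha]]$, use the Chinese remainder theorem to find a \emph{polynomial} $v_0\in(x^{-1}R[[x]]/[p]x)[z]$ agreeing with $u_0$ modulo $\prod_\alpha z_\alpha$, split $v_0$ via the strictly Cartesian square \rref{epullback}, divide the remainder by $\prod_\alpha z_\alpha$, and repeat. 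Once joint surjectivity is in hand, the paper concludes strict Cartesianness by citing the general fact from \cite{zp} that the right square of the Tate diagram is strictly Cartesian on coefficients whenever its bottom row and right column are jointly onto. Your Mayer--Vietoris/$\lim^1$ discussion is a reasonable substitute for this last step, but it does not replace the CRT iteration.
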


\begin{proof}
The equivariant infinite complex projective space is the space of complex lines in a complete $G=\Z/p$-universe
$\mathcal{U}$. The universe can be filtered by finite-dimensional representations $V$, and thus, it follows
from the complex orientation that
$$\mathcal{E}^*_{\Z/p}(P(V))$$
is a free $\mathcal{E}^*_{\Z/p}$-module on monomials 
$$1, z_{\alpha_1},z_{\alpha_1}z_{\alpha_2},\dots, z_{\alpha_1}z_{\alpha_2}\dots z_{\alpha_{k-1}}$$
where $\alpha_1$, $\alpha_1\oplus \alpha_2$,\dots $\alpha_1\oplus\alpha_2\oplus\dots\oplus
\alpha_k$ is some complete
equivariant flag in $V$. 

On the other hand, the ring $\mathcal{O}_{{}_pE}$ is locally complete intersection. To see this, 
it suffices to proceed locally, and in fact to complete (\cite{matsu}, Theorem 21.2). In particular, we can
replace $R$ by its completion $\widehat{R}$ at a maximal ideal. Then, for example by \cite{tate},
locally, $\mathcal{O}_{{}_pE}$ is isomorphic to a formal group, i.e. a ring of the form 
$\widehat{R}[[x]]/[p]x$, so it is a locally complete intersection.

Now since $\mathcal{O}_{{}_pE}$ is a locally complete intersection, by \cite{matsu}, Theorem 21.2,
locally, it is a quotient of, say, $\mathcal{O}_U$, by a principal ideal $(q)$, since $\mathcal{O}_U$ is an 
integral domain, $(q^n)/(q^{n+1})\cong \mathcal{O}_U/(q)$ as $\mathcal{O}_U$-modules.

Thus, \rref{etefgl1} follows by taking limits.

Next, we prove that the right vertical arrow and the bottom horizontal arrow of \rref{etefgl2} 
are jointly onto. Consider an element 
$$u_0=(u_{0,\alpha})_\alpha\in \prod_{\alpha\in \Z/p^*} (x^{-1}R[[x]]/[p]x)[[z_\alpha]].$$
Then by the Chinese remainder theorem, there exists an element 
$$v_0\in (x^{-1}R[[x]]/[p]x)[z]$$
which is equal to $u_0$ modulo $\displaystyle \prod_\alpha z_\alpha$. Additionally,
since the diagram \rref{epullback} is strictly cartesian, the element $v_0$ can be written as a sum
of an element 
$$w_0\in (R[[x]]/[p]x)[z]$$
and an element 
$$q_0\in \prod_\alpha (x^{-1}\mathcal{O}_{{}_pE})[z_\alpha].$$
Now choose
$$u_1\in \prod_{\alpha\in \Z/p^*} (x^{-1}R[[x]]/[p]x)[[z_\alpha]]$$
so that
$$u_1\prod_\alpha z_\alpha = u_0-v_0-w_0$$
and repeat this procedure.

Now the diagram \rref{epullback} is the coefficients of the right square 
of the ``Tate diagram" of the $\Z/p$-equivariant spectrum $\mathcal{E}_{\Z/p}$, i.e. the diagram
$$\resizebox{0.9\hsize}{!}{\diagram
E\Z/p_+\wedge \mathcal{E}_{\Z/p}\dto_\sim\rto &\mathcal{E}_{\Z/p}\dto \rto & S^{\infty\gamma}\wedge
\mathcal{E}_{\Z/p}\dto \\
E\Z/p_+\wedge F(E\Z/p_+,\mathcal{E}_{\Z/p})\rto &
F(E\Z/p_+,\mathcal{E}_{\Z/p})\rto &
S^{\infty\gamma}\wedge F(E\Z/p_+,\mathcal{E}_{\Z/p}).
\enddiagram}
$$
One obtains the diagram \rref{etefgl2} by applying $F((\C P^\infty_{\Z/p})_+,?)$.
The rows are cofibration sequences. It was proved in \cite{zp} that when the right column
and the bottom row of the right square of such a diagram are onto on coefficients, the diagram is
strictly Cartesion on coefficients.

\end{proof}


\vspace{5mm}

\section{Equivariant formal groups from $p$-divisible groups}\label{spdiv}

It is pretty clear from the onstructions we performe so far that the passage from an elliptic curve to
a $\Z/p$-equivariant spectrum only depends on its $p$-divisible group, or, more precisely,
its pullback by the simple $p$-torsion subgroup of the \'{e}tale part.
(In fact, as we will see later, the same is true for $\Z/p^n$ if we pull back to $p^n$-torsion of 
the \'{e}tale part.) Because of this,
let us adapt the discussion to that context.

Let $R$ be a complete Noetherian local ring of characteristic $0$ whose residue
field is a perfect field of characteristic $p$. Let $G$ be a $p$-divisible group (also known
as Barsotti-Tate group) over $Spec(R)$. Then by \cite{tate},
we have a short exact sequence
\beg{espdiv}{0\r G_0\r G\r G_{et}\r 0}
where $G_0$ is connected and corresponds to a formal group (which we
will identify with $G_0$), and $G_{et}$ is \'{e}tale. We will work with
the following 
\beg{eass}{\parbox{3.5in}{{\bf Assumption:} The formal group $G_0$ has a $1$-dimensional summand
$G^\prime_0$.}
}
Then we have a diagram
$$\diagram
0\rto & G_0\rto\dto & G\rto\dto & G_{et}\dto^{=}\rto &0\\
0\rto & G^\prime_0\rto & G^\prime\rto & G_{et}\rto & 0
\enddiagram
$$
where the left square is a pushout. 
Denoting by ${}_pG$ the $p$-torsion subgroup of $G$, we obtain a short exact sequence
$$0\r {}_{p^r}G^\prime_0\r {}_{p^r}G^\prime\r {}_{p^r}G^\prime_{et}\r 0.$$
Now we have a diagram where the right square is a pullback:
$$\diagram
0\rto & G^\prime_0\dto _{=}\rto &G^{\prime\prime}\dto\rto & {}_{p^r}G^{\prime}_{et}\dto\rto & 0\\
0\rto & G^\prime_0\rto & G^\prime\rto & G_{et}\rto &0.
\enddiagram
$$
Now consider the diagram
\beg{ebigdig}{\diagram
&0\dto&0\dto&&\\
0\rto &{}_{p^r}G^\prime_0\dto\rto & {}_{p^r}G^\prime\dto\rto &{}_{p^r}G^\prime_{et}\dto^=\rto & 0\\
0\rto & G^\prime_0\rto\dto & G^{\prime\prime}\dto\rto & {}_{p^r}G^\prime_{et}\rto & 0\\
&G^\prime_0/{}_{p^r}G^\prime_0\rto_=\dto &G^\prime_0/{}_{p^r}G^\prime_0.\dto &&\\
&0&0&&
\enddiagram
}

\begin{construction}\label{tpdiv}
Fix a parameter $z\in \mathcal{O}_{G^{\prime\prime}}$ such that 
$$\mathcal{O}_{G^{\prime\prime}}/(z)=R$$
(note that such a parameter always exists, since $\mathcal{O}_{G_0^\prime}$ is a power series
ring in one generator which can be extended to be non-zero over the non-zero part of ${}_pG^\prime_{et}$).
This data specifies a $\Z/p^r$-equivariant formal group law on
\beg{ebigdig1}{\mathcal{O}_{G^{\prime\prime}}\widehat{\otimes}_{R}
\mathcal{O}_{{}_{p^r}G^\prime}}
over $\mathcal{O}_{{}_{p^r}G^\prime}$.
\end{construction}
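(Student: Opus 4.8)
The plan is to mimic, in this more general setting, the argument used to construct $\mathcal{E}_{\Z/p}$ in Sections~\ref{sequivconst}--\ref{sco}, the point being that the elliptic curve never really entered those arguments except through the diagram \rref{ebigdig} of $p$-divisible groups and the Tate square \rref{epullback}. So first I would record the abstract input: from the right square of \rref{ebigdig}, which is a pullback of group schemes, together with the fact that ${}_{p^r}G'$ is finite flat over $R$ (standard for $p$-divisible groups, by \cite{tate}) and that $\mathcal{O}_{{}_{p^r}G'_0}$ is a quotient of a one-variable power series ring $\mathcal{O}_{G'_0}=R[[w]]$ by $[p^r]w$, one obtains exactly as in Theorem~\ref{t1} a strictly Cartesian square
\beg{eptgen}{\diagram
\mathcal{O}_{G''}\rto\dto & z^{-1}\mathcal{O}_{G''}\dto\\
R[[w]]/[p^r]w\rto & z^{-1}R[[w]]/[p^r]w
\enddiagram}
after choosing the parameter $z$ as in the statement. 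Here $\mathcal{O}_{G''}$ is finite flat over $R$ because $G''$ sits in the extension $0\to G'_0\to G''\to {}_{p^r}G'_{et}\to 0$ with finite flat quotient and $\mathcal{O}_{G'_0}$ a power series ring, so completing at $(z)$ and inverting $z$ recovers the two corners. The proof of strict Cartesianness is then formally identical to that of Theorem~\ref{t1}: the right column and bottom row are jointly onto since $z^{-1}R[[w]]$ is generated by $R[[w]]$ and $R[z^{-1}]$, one checks the comparison map is an isomorphism away from $(z)$ and modulo $(z)$ via the snake lemma, and then concludes by a Nakayama/Nullstellensatz argument localizing at maximal ideals (with the honest-power-series computation showing $z^n\neq 0$ in $R_m[[w]]/([p^r]w,z^N)$).

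**Next**, one realizes \rref{eptgen} topologically. Since $R$ is complete Noetherian local with perfect residue field of characteristic $p$, the formal group $G'_0$ and hence the formal group law on $R$ is Landweber-exact (this is where the one-dimensional hypothesis \rref{eass} and \cite{tate} are used), so by the Landweber exact functor theorem there is a commutative associative ring spectrum $\mathcal{E}$ with $\mathcal{E}_*X=MU_*X\otimes_{MU_*}R$; because $\mathcal{O}_{{}_{p^r}G'}$ is a finitely generated projective $R$-module, the Landweber-exactness passes to it as well. Then one forms, exactly as in Section~\ref{sequivconst}, the diagram of non-rigid commutative associative $\Z/p^r$-equivariant ring spectra
\beg{eptspec}{\diagram
& S^{\infty\gamma}\wedge T\dto\\
F(E\Z/p^r_+,\mathcal{E})\rto & S^{\infty\gamma}\wedge F(E\Z/p^r_+,\mathcal{E})
\enddiagram}
(with $\gamma$ now the reduced regular representation of $\Z/p^r$ and $T$ the Landweber-exact spectrum with coefficients $z^{-1}\mathcal{O}_{{}_{p^r}G'}$ built via Lemma~\ref{lgeom}), whose homotopy pullback is a $\Z/p^r$-equivariant commutative associative ring spectrum; the argument that the ring structure descends is the one already given, using that the homotopy pullback is weakly symmetric monoidal and that the relevant odd-degree obstruction groups $S_3^1(S_i\wedge S_i)$ vanish. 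Complex-orientability then follows from the map from the $MU_{\Z/p^r}$-analogue of \rref{emudiag} (whose pullback is $MU_{\Z/p^r}$), the map at the upper-right corner being specified on coefficients by $u_\alpha\mapsto[\alpha]z$, $b_i^\alpha\mapsto \mathrm{coeff}_{y^i}(y+_F[\alpha]z)$ and lifted to $z^{-1}\mathcal{O}_{{}_{p^r}G'}$ via the comultiplication $\mathcal{O}_{G''}\to\mathcal{O}_{G''}\widehat\otimes_R\mathcal{O}_{{}_{p^r}G'}$ completed along the quotient $\mathcal{O}_{{}_{p^r}G'}\to\mathcal{O}_{{}_{p^r}G'}\widehat\otimes_R\mathcal{O}_{{}_{p^r}G'}$. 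Applying $F((\C P^\infty_{\Z/p^r})_+,?)$ to the Tate diagram of this spectrum and invoking (as in the proof of Theorem~\ref{tefgl}) the result of \cite{zp} that joint surjectivity on coefficients of the right column and bottom row forces strict Cartesianness, one obtains the equivariant formal group law, living over $\mathcal{O}_{{}_{p^r}G'}$ on the coalgebra $\mathcal{O}_{G''}\widehat\otimes_R\mathcal{O}_{{}_{p^r}G'}$.

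**The main obstacle** I anticipate is not topological but algebraic bookkeeping at the level of \rref{ebigdig}: in the elliptic case one had a genuinely geometric open subset $U$ and a concrete Weierstrass equation making the identification of $z^{-1}\mathcal{O}_{{}_pE}$ with the comultiplication-completion transparent, whereas here one must extract the analogue purely from the column-and-row exactness of \rref{ebigdig}, in particular verifying that the chosen $z$ does extend to be invertible precisely over the nonzero part of ${}_{p^r}G'_{et}$ (so that $z^{-1}\mathcal{O}_{G''}$ is supported there) and that $\mathcal{O}_{G''}$ is locally a complete intersection over $R$ — which I would again deduce by completing at a maximal ideal and using \cite{tate} to identify the connected part locally with $R[[w]]/[p^r]w$, together with the pullback description of $G''$, so that \cite{matsu}, Theorem~21.2 applies. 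Once these local structural facts are in hand, everything else is a faithful transcription of the $\Z/p$ arguments, with $p$ replaced by $p^r$ throughout; the details specific to $\Z/p^r$-equivariant isotropy separation that this requires are precisely what Section~\ref{szpr} is devoted to.
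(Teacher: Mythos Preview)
Your approach is quite different from the paper's, and far more elaborate than what is being asked. The paper's proof of Construction~\ref{tpdiv} is purely algebraic and essentially five lines: writing $R[[z]]=\mathcal{O}_{G_0'}$ and $t=[p^r]z$, one has $R[[t]]=\mathcal{O}_{G_0'/{}_{p^r}G_0'}$; by the right column of diagram~\rref{ebigdig}, pulling back over ${}_{p^r}G'_{et}$ to ${}_{p^r}G'$ exhibits $t$ as a ``good parameter'' in Strickland's sense on $G''\times_{{}_{p^r}G'_{et}}{}_{p^r}G'$. The required character map $(\Z/p^r)^*\times {}_{p^r}G'\to G''\times_{Spec(R)}{}_{p^r}G'$ sends $(\alpha,?)$ to $\alpha$ times the inclusion ${}_{p^r}G'\hookrightarrow G''$, and the axioms of \cite{cgk,stricmult} are then checked directly. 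No spectrum, no Tate square, no Landweber exactness enters.

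Your route --- build a complex-oriented $\Z/p^r$-equivariant spectrum first and then read off its equivariant formal group law --- is the direction the paper runs \emph{afterwards}, in Sections~\ref{sbt} and~\ref{szpr}, where spectra are constructed \emph{from} the equivariant FGL of Construction~\ref{tpdiv}. As a proof of the Construction itself it is circular in spirit, and it also has genuine gaps as stated. First, Landweber exactness of $G_0'$ over an arbitrary complete Noetherian local $R$ of characteristic~$0$ with perfect residue field is not automatic; the paper only invokes it later, over the universal deformation ring~\rref{ewdef}, where it does hold. Second, for $r>1$ a single Tate-type pullback as in your diagram~\rref{eptspec} does not determine a genuine $\Z/p^r$-equivariant spectrum: the intermediate isotropy layers $\widetilde{E\mathcal{F}[\Z/p^i]}\wedge F(E\mathcal{F}[\Z/p^{i+1}]_+,-)$ for $0<i<r$ must all be specified, which is exactly the content of the staircase diagram~\rref{ezpreq}. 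You acknowledge this at the end, but your actual construction does not implement it.
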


\begin{proof}
If we denote
$$R[[z]]=\mathcal{O}_{G_0^\prime},$$
and put $t=[p^r]z$, Then we can write
$$R[[t]]=\mathcal{O}_{G^\prime_0/{}_{p^r}G^\prime_0},$$
which, by Diagram \rref{ebigdig}, after pulling back over ${}_{p^r}G^\prime_{et}$ to
${}_{p^r}G^\prime$
specifies a ``good parameter" (in the sense of Strickland \cite{stricmult})
on $G^{\prime\prime}\times_{{}_pG^\prime_{et}}{}_{p^r}G^\prime$. The required map
$${}_{p^r}G^\prime\times \Z/p^*\r G^{\prime\prime}\times_{Spec(R)}{}_{p^r}G^\prime$$
over ${}_{p^r}G^\prime$ is given by sending 
$${}_{p^r}G^\prime\times \{\alpha\}\r G^{\prime\prime}\times_{Spec(R)}{}_{p^r}G^\prime$$
via $\alpha$ times the embedding. The required properties are easily verified.
\end{proof}

\vspace{3mm}
\noindent
{\bf Comment:} This type of $\Z/p^r$-equivariant formal group law is not general, not only because
we assumed that $R$ is a complete local ring. It has, in fact, extra structure. In the notation of Strickland 
\cite{stricmult}, Definition 2.16, the scheme $S$ is in fact a commutative group scheme over another
scheme $T$ (in our case, $T=Spec(R)$), and in the map 
$$\phi:(\Z/p^r)^*\times S\r C,$$
for $\alpha\in (\Z/p^r)^*$, 
$$\phi(\alpha,?):S\r C,$$
where $C$ is the formal group scheme involved in the definition \cite{stricmult} of an equivariant FGL,
is in fact
a homomorphism of formal group schemes over $T$. This type of equivariant formal group law 
can
be reinterpreted in terms of level structures on moduli spaces of group schemes. From that point
of view, the
present example of $p$-divisible groups, was, in fact, treated explicitly by Peter Scholze \cite{scholze}.

\vspace{3mm}
\section{$\Z/p$-equivariant Barsotti-Tate cohomology}\label{sbt}

Let $k$ be a perfect field of characteristic $p$ and let $G$ be a $p$-divisible group. 
Then the short exact sequence \rref{espdiv} splits canonically, so we may as well just take
as our data the formal group 
$G_0$  and the \'{e}tale group $G_{et}$. Suppose $G_0$ has height $h$ and dimension $1$
and $G_{et}$ has height $k$. The Lubin-Tate's theorem \cite{lt} has the following extension

\begin{theorem}\label{tbl} (\cite{bl}, Theorem 7.1.3.)
Deformations $\widetilde{G}$ of $G$ to complete local rings with residue field $k$ are represented by the ring
\beg{ewdef}{W(k)[[u_1,\dots,u_{h-1}]][[w_1,\dots,w_k]]}
where $W(k)$ denotes the ring of Witt vectors of $k$.
\end{theorem}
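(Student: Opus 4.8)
The statement is a deformation-theory result of Behrens and Lawson, so the plan is to recall the structure of their argument and indicate how each piece is assembled, rather than to reprove Lubin--Tate theory from scratch. The starting point is the classical Lubin--Tate theorem \cite{lt}: deformations of the one-dimensional formal group $G_0$ of height $h$ over $k$ are pro-represented by $W(k)[[u_1,\dots,u_{h-1}]]$. For the $p$-divisible group $G = G_0 \times G_{et}$ (the splitting being canonical over the perfect field $k$), one studies the deformation functor $\mathrm{Def}_G$ on complete local rings with residue field $k$, and the claim is that this functor is pro-represented by the displayed ring \rref{ewdef}. The approach is to split the deformation problem according to the canonical exact sequence: first deform the formal part $G_0$, then deform the étale part $G_{et}$, and finally account for the extension data relating a deformation of $G_0$ to a deformation of $G_{et}$.

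First I would treat the étale part. An étale $p$-divisible group of height $k$ over $k$ is, after base change to $W(k)$ (or rather over any complete local ring with residue field $k$), equivalent to its Tate module, a free $\Z_p$-module of rank $k$ with continuous Galois (here trivial, since $k$ is separably closed in the relevant sense, or one works with the connected-étale sequence over the residue field) action; its deformations are unobstructed and in fact $G_{et}$ lifts uniquely to $W(k)$ as an étale group. So the nontrivial deformation parameters for the étale part will not come from deforming $G_{et}$ in isolation. Instead, the parameters $w_1,\dots,w_k$ arise from the \emph{extension} of the deformed étale group by the deformed formal group: given a deformation $\widetilde{G_0}$ over a base $A$ and the canonical lift $\widetilde{G_{et}}$, the set of extensions $0 \to \widetilde{G_0} \to \widetilde G \to \widetilde{G_{et}} \to 0$ lifting the split extension over $k$ is a torsor under $\mathrm{Ext}^1(\widetilde{G_{et}}, \widetilde{G_0})$, and since $\widetilde{G_{et}}$ has Tate module free of rank $k$, this $\mathrm{Ext}$ group is identified with $(\widetilde{G_0}(A))^{\oplus k}$, i.e. $k$ copies of the formal group of $\widetilde{G_0}$ evaluated on the maximal ideal. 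Choosing a coordinate on $\widetilde{G_0}$, each such copy contributes one topologically nilpotent parameter, giving the $k$ generators $w_1,\dots,w_k$.

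The main step is then to assemble these into a single pro-representing object and check formal smoothness of the total functor over $W(k)$ of the correct relative dimension. Concretely, one shows: (i) the forgetful map $\mathrm{Def}_G \to \mathrm{Def}_{G_0}$ is (formally) smooth with fibers the extension torsors described above; (ii) $\mathrm{Def}_{G_0}$ is pro-represented by $W(k)[[u_1,\dots,u_{h-1}]]$ by Lubin--Tate; (iii) the relative tangent space of the fibration has dimension $k$, with the torsor structure making the total space a formally smooth $W(k)$-algebra of relative dimension $h-1+k$. Combining, $\mathrm{Def}_G$ is pro-represented by a formally smooth complete local $W(k)$-algebra of relative dimension $h-1+k$ with residue field $k$, hence (by the standard structure theory of such rings) by $W(k)[[u_1,\dots,u_{h-1}]][[w_1,\dots,w_k]]$. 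The step I expect to be the main obstacle is the precise identification of the fiber of $\mathrm{Def}_G \to \mathrm{Def}_{G_0}$ with the extension group $\mathrm{Ext}^1(\widetilde{G_{et}}, \widetilde{G_0})$ together with the verification that this $\mathrm{Ext}$ group is pro-represented cleanly by $k$ copies of the formal group — one must be careful that the connected-étale sequence stays functorial and split over \emph{all} the Artinian test rings (not just over $k$), that $\mathrm{Hom}(\widetilde{G_{et}}, \widetilde{G_0})$ vanishes so the torsor structure is rigid, and that higher $\mathrm{Ext}$'s vanish so there is no obstruction. All of this is exactly the content of \cite{bl}, Theorem 7.1.3, so in the write-up I would state the decomposition, cite Lubin--Tate for the formal part, invoke the connected-étale splitting and the $\mathrm{Ext}$-torsor computation for the remaining parameters, and conclude by the structure theorem for formally smooth complete local $W(k)$-algebras.
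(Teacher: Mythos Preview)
Your proposal is correct, and in fact supplies far more than the paper does: the paper gives no proof of this theorem at all, simply citing it as \cite{bl}, Theorem 7.1.3, and marking it with a \qed. The brief explanatory paragraph that follows in the paper (Lubin--Tate for the $u_i$, uniqueness of the \'{e}tale lift, and the $w_j$ as extension parameters) matches your decomposition exactly, so your sketch is entirely in line with the intended interpretation of the parameters.
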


\qed

\vspace{3mm}
In \rref{ewdef}, 
$$W(k)[[u_1,\dots,u_{h-1}]]$$
represents the deformations $\widetilde{G}_0$
of $G_0$. The deformation $\widetilde{G}_{et}$ of $G_{et}$ is unique, and
the parameters $w_1,\dots,w_k$ classify the extension. Non-trivial extensions arise,
roughly, by the following principle: Suppose $p$ times a closed point $Q$ of $\widetilde{G}_{et}$ is $0$ in 
$\widetilde{G}_{et}$. Then the extension specifies a closed point
$[p]_{\widetilde{G}}Q$ in $\widetilde{G}_0$. Now $\widetilde{G}_0$ is also $p$-divisible, so $[p]Q=[p]P$ for some
point $P$ of $\widetilde{G}_0$. Thus, we have $[p](Q-P)=0$, but the point $Q-P$ is not defined
in an \'{e}tale extension, since $P$ is, in fact, totally ramified. 

This discussion must be carried out on coordinates (instead of points) to give a rigorous argument, but it
shows that even ${}_p\widetilde{G}$ is a non-trivial extension of its formal part by its \'{e}tale part.

\vspace{3mm}
Now Goerss and Hopkins \cite{gh} proved that there exists an $E_\infty$-ring spectrum $E_h$ with
coefficients
\beg{ehcoeff}{E_{h*}=W(k)[[u_1,\dots,u_{h-1}]][u,u^{-1}]
}
which is $H_\infty$-complex-oriented by Ando \cite{ando}, and its complex orientation, on coefficients, is the map from the Lazard ring
to \rref{ehcoeff} which classifies the universal deformation of the height $h$ Lubin-Tate formal group law
\cite{lt}. Since the additional generators are in degree $0$, 
by the same method, one can construct a non-equivariant $E_\infty$-ring spectrum $E_{(h,k)}$ with
\beg{ehkcoeff}{E_{(h,k)*}=W(k)[[u_1,\dots,u_{h-1},w_1,\dots,w_k]][u,u^{-1}]
}
together with an $H_\infty$-complex orientation which, on coefficients, factors through the inclusion
from \rref{ehcoeff} to \rref{ehkcoeff}. Now let $(\mathcal{O}_{{}_p\widetilde{G}})_{(0)}$,
$(\mathcal{O}_{{}_p\widetilde{G}_0})_{(0)}$ denote the localizations of the rings
$\mathcal{O}_{{}_p\widetilde{G}}$, $\mathcal{O}_{{}_p\widetilde{G}_0}$ away from the zero section 
of $\widetilde{G}$. Then we have (in this case, automatically) a pullback diagram of the form
\beg{ebtpull}{
\diagram
\mathcal{O}_{{}_p\widetilde{G}}\rto\dto & (\mathcal{O}_{{}_p\widetilde{G}})_{(0)}\dto\\
\mathcal{O}_{{}_p\widetilde{G}_0}\rto & (\mathcal{O}_{{}_p\widetilde{G}_0})_{(0)}.
\enddiagram
}
Our aim is to realize the diagram \rref{ebtpull}, (after attaching $[u,u^{-1}]$) by coefficients of equivariant
$E_\infty$-ring spectra. 

In effect, for the terms of the bottom row of \rref{ebtpull}, we can just take
$F(E\Z/p_+,E_{(h,k)})$ and $S^{\infty \gamma}\wedge F(E\Z/p_+,E_{(h,k)})$. On the other hand, 
for the right column, we can use Lemma \ref{lgeom}. The ring $\mathcal{O}_{{}_p\widetilde{G}}$
is a finitely generated free module over \rref{ewdef}. Thus, we can construct a non-equivariant spectrum
with coefficients $\mathcal{O}_{{}_p\widetilde{G}}[u,u^{-1}]$, and give it $E_\infty$-ring structure
using \cite{gh}. Localizing away from the pullback of the $0$-section of $\widetilde{G}$
can be done in the $E_\infty$-category, as well as constructing an $E_\infty$ map to the fixed point
spectrum of $S^{\infty \gamma}\wedge F(E\Z/p_+,E_{(h,k)})$. Using Lemma \ref{lgeom}, then,
we obtain an $S^{\infty\gamma}$-$E_\infty$-algebra $E_{(h,k,0)}$ and a $\Z/p$-equivariant
$E_\infty$-map 
$$E_{(h,k,0)}\r S^{\infty \gamma}\wedge F(E\Z/p_+,E_{(h,k)})$$
realizing, in the category of $E_\infty$-$\Z/p$-ring spectra, 
the right column of \rref{ebtpull} (after adjoining $[u,u^{-1}]$). We call the homotopy pullback of
the $\Z/p$-equivariant $E_\infty$-realization of the lower row and right column of \rref{ebtpull} the 
{\em $\Z/p$-equivariant Barsotti-Tate cohomology $\mathcal{R}$}. It realizes the $\Z/p$-equivariant
formal group law of Theorem \ref{tpdiv} in the case when $R$ is the ring 
$\mathcal{O}_{{}_p\widetilde{G}}[u,u^{-1}]$, and $G^\prime$ is
the $p$-divisible group $\widetilde{G}$.

\vspace{3mm}

\section{The case of $\Z/p^{r}$ for $r>1$.}\label{szpr}

There is a more involved but analogous picture of all of the above discussion for
$\Z/(p^r)$-equivariant spectra. First, let us recall the structure of $\Z/p^r$-equivariant
complex cobordism. For a finite abelian group $G$ and a subgroup $H\subseteq G$, we have the
family $\mathcal{F}[H]$ of subgroups of $G$ not containing $H$. For $G=\Z/p^r$, we have
a chain of families
$$\mathcal{F}[\{e\}]\subset
\mathcal{F}[\Z/p]\subset\mathcal{F}[\Z/p^2]\subset\dots\subset \mathcal{F}[\Z/p^{r+1}].$$
(The first family is empty and the last one is, by convention, the family of all subgroups of $\Z/p^r$.)
It is also convenient to write 
$$E\Z/p^{r-i}=E\mathcal{F}[\Z/p^{i+1}].$$

Now it is proved in \cite{hu} that any $\Z/p^r$-equivariant spectrum $X$ is equivalent to
the homotopy limit of the finite diagram
\beg{ezpreq}{\resizebox{0.9\hsize}{!}{\diagram
&&\vdots\dto\\
&\widetilde{E\mathcal{F}[\Z/p^i]}\wedge F(E\mathcal{F}[\Z/p^{i+1}]_+,X)\dto\rto &
\widetilde{E\mathcal{F}[\Z/p^{i+1}]}\wedge F(E\mathcal{F}[\Z/p^{i+1}]_+,X)\\
\dots \rto & \widetilde{E\mathcal{F}[\Z/p^i]}\wedge F(E\mathcal{F}[\Z/p^{i}]_+,X). &
\enddiagram}
}
(This is a version of {\em isotropy separation}.) The first term on the lower left is 
$$\widetilde{E\mathcal{F}[\Z/p^0]}\wedge F(E\mathcal{F}[\Z/p]_+,X)=F(E\Z/p^r_+,X),$$
whose fixed points are the Borel cohomology of $X$, the last term on the upper right is
$$\widetilde{\mathcal{F}[\Z/p^r]}\wedge F(E\mathcal{F}[\Z/p^{r+1}]_+,X)=
\widetilde{\mathcal{F}[\Z/p^r]}\wedge X,$$
whose fixed points are the ``geometric fixed points" $\Phi^{\Z/p^r}X$. For any $\Z/p^r$-equivariant
spectrum $X$, the homotopy limits of  ``partial staircase diagrams" obtained by omitting an upper right
(resp. a lower left) section of \rref{ezpreq} are of the form
$$F(E\mathcal{F}[\Z/p^i]_+, X),$$
resp.
$$\widetilde{E\mathcal{F}[\Z/p^i]}\wedge X$$
with varying $i$ (see \cite{hu}).

\vspace{3mm}
Furthermore, in the case of $X=MU_{\Z/p^r}$, all of the terms of \rref{ezpreq}
are computable, and the limit of the diagram \rref{ezpreq} on coefficients is equal to the 
coefficients of $MU_{\Z/p^r}$ (without higher derived functors), \cite{hu}. The
term 
$(\widetilde{E\mathcal{F}[\Z/p^i]}\wedge F(E\mathcal{F}[\Z/p^{i+1}]_+,MU_{\Z/p^r}))_*$
is calculated explicitly in \cite{hu}. The main idea is that inductively, we can calculate 
$(MU_{\Z/p^i})_*$ and by the Borel cohomology spectral sequence, an associated graded
object of $F(E\mathcal{F}[\Z/p^{i+1}]_+,MU_{\Z/p^r})_*$
is
$$(MU_{\Z/p^i})_*[[u_{p^i}]]/[p^{r-i}]u_{p^i}.
$$
(One can be explicit about the extension \cite{hu}.) The desired term
$(\widetilde{E\mathcal{F}[\Z/p^i]}\wedge F(E\mathcal{F}[\Z/p^{i+1}]_+,MU_{\Z/p^r}))_*$
is obtained by inverting the Euler class $u_{p^{i-1}}\in (MU_{\Z/p^i})_*$.
One then has
$$
\begin{array}{l}
(\widetilde{E\mathcal{F}[\Z/p^{i+1}]}\wedge F(E\mathcal{F}[\Z/p^{i+1}]_+,MU_{\Z/p^r}))_*=\\
u_{p^i}^{-1}(\widetilde{E\mathcal{F}[\Z/p^i]}\wedge F(E\mathcal{F}[\Z/p^{i+1}]_+,MU_{\Z/p^r}))_*.
\end{array}
$$
A subtle point is that the ``staircase diagram" which occurs in \cite{ak} is actually
different. To prevent confusion, we clarify the relationship in Appendix I (Section \ref{sapp}).

\vspace{3mm}
Now we can construct commutative complex-oriented $\Z/p^r$-equivariant ring 
spectra with coefficient rings $\mathcal{O}_{{}_{p^r}E}$ where $E$ is as in Section \ref{sell} 
(elliptic cohomology), or
$\mathcal{O}_{{}_{p^r}\widetilde{G}}[u,u^{-1}]$ (Barsotti-Tate cohomology)
where $\widetilde{G}$ is as in Section \ref{sbt}
as homotopy limits of diagrams of the form \rref{ezpreq}, by constructing the appropriate diagrams of the
form 
\beg{ggeomr}{\diagram
&&\vdots\dto\\
& R_i\dto\rto & R^\prime_i\\
\dots \rto & R^\prime_{i-1} &
\enddiagram
}
on rings of
coefficients. Additionally, in the case of Barsotti-Tate cohomology, we can realize the diagram in question
(and hence its homotopy limit) in the category of $\Z/p^r$-equivariant $E_\infty$-ring spectra $\mathcal{R}_i$.

The analogue of Lemma \ref{lgeom} in the case of $\Z/p^r$ is the following result. 
Recall first that $\Z/p^r$-equivariantly, $\widetilde{E\mathcal{F}[\Z/p^i]}$ 
has a model as an $E_\infty$-ring 
space, namely $S^{\infty V}$ where $V$ is the sum of all non-isomorphic irreducible $\Z/p^r$-representations
which have trivial $\Z/p^i$-fixed points.

\begin{lemma}\label{geomr}
The $\Z/p^i$-fixed point functor and $?\wedge S^{\infty V}$ induce inverse equivalences of categories between
the derived category of $\Z/p^r$-equivariant $S^{\infty V}$-module spectra and $\Z/p^{r-i}$-equivariant 
spectra.
\end{lemma}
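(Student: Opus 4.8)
The plan is to identify $S^{\infty V}$ with $\widetilde{E\mathcal{F}[\Z/p^i]}$ and then run the standard geometric-fixed-point localization argument, in exact parallel with Lemma \ref{lgeom} (which is the case $r=i=1$). First I would record the representation-theoretic computation: an irreducible complex $\Z/p^r$-representation is a character $\chi$, and $\C_\chi$ has trivial $\Z/p^i$-fixed points iff $\Z/p^i\not\subseteq\ker\chi$. Writing a subgroup $H$ of $\Z/p^r$ as $\Z/p^\ell$, one checks that $V^H=0$ precisely when $\ell\ge i$, i.e. when $H\supseteq\Z/p^i$, and $V^H\ne 0$ otherwise. Hence $\Phi^H(S^{\infty V})\simeq S^{\infty(V^H)}$ equals $S^0$ when $H\supseteq\Z/p^i$ and is contractible for $H\in\mathcal{F}[\Z/p^i]$ --- exactly the characterization of $\widetilde{E\mathcal{F}[\Z/p^i]}$. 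Thus $S^{\infty V}$ is an $E_\infty$-ring model for $\widetilde{E\mathcal{F}[\Z/p^i]}$, it is smash-idempotent, every $S^{\infty V}$-module $M$ satisfies $M\simeq M\wedge S^{\infty V}$ and therefore $\Phi^H M\simeq *$ for every $H$ not containing $\Z/p^i$, and for such modules the Borel part $F(E\mathcal{F}[\Z/p^i]_+,M)$ is killed, so the categorical $\Z/p^i$-fixed points of $M$ coincide with the geometric fixed points $\Phi^{\Z/p^i}M$, which is a spectrum over the Weyl group $\Z/p^r/\Z/p^i\cong\Z/p^{r-i}$.

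The two functors are then $M\mapsto\Phi^{\Z/p^i}M$ and, in the other direction, $Y\mapsto(\mathrm{infl}\,Y)\wedge S^{\infty V}$, where $\mathrm{infl}$ denotes inflation along the quotient $\Z/p^r\to\Z/p^{r-i}$. To show they are mutually inverse I would use the standard properties of geometric fixed points (as in \cite{lms}, Sections II.8, II.9): $\Phi^{\Z/p^i}$ is monoidal, $\Phi^{\Z/p^i}\circ\mathrm{infl}\simeq\mathrm{id}$, $\Phi^{\Z/p^j}\simeq\Phi^{\Z/p^{j-i}}\circ\Phi^{\Z/p^i}$ for $j\ge i$, and the family $\{\Phi^H\}_{H\le\Z/p^r}$ is jointly conservative. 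The counit is immediate: $\Phi^{\Z/p^i}((\mathrm{infl}\,Y)\wedge S^{\infty V})\simeq\Phi^{\Z/p^i}(\mathrm{infl}\,Y)\wedge\Phi^{\Z/p^i}(S^{\infty V})\simeq Y\wedge S^0\simeq Y$. For the unit, given an $S^{\infty V}$-module $M$ I would check that the natural map $(\mathrm{infl}\,\Phi^{\Z/p^i}M)\wedge S^{\infty V}\to M$ is a $\Phi^H$-equivalence for every subgroup $H$: when $H\not\supseteq\Z/p^i$ both sides have contractible $\Phi^H$ (the source because $\Phi^H S^{\infty V}\simeq *$, the target because $M$ is an $S^{\infty V}$-module), and when $H=\Z/p^j$ with $j\ge i$ the displayed identities give $\Phi^{\Z/p^j}$ of the source as $\Phi^{\Z/p^{j-i}}\Phi^{\Z/p^i}M\wedge S^0\simeq\Phi^{\Z/p^j}M$, compatibly with this identification. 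Joint conservativity then upgrades these to equivalences of $\Z/p^r$-spectra.

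Since the formal skeleton is identical to the $r=1$ case, the only real work is bookkeeping: keeping track of which subgroups of $\Z/p^r$ contain $\Z/p^i$ and identifying that truncated lattice with the full subgroup lattice of $\Z/p^{r-i}$, and --- this is the point I expect to be the genuine (if mild) obstacle --- pinning down point-set models (an $E_\infty$ model of $S^{\infty V}$ and a compatibly associative, commutative smash product) so that the phrase ``derived category of $S^{\infty V}$-modules'' and the monoidal compatibility statements are literally meaningful and the functors above are honestly defined; this is handled exactly as in the references cited for Lemma \ref{lgeom}. Alternatively, one can sidestep most of this by induction on $i$, applying the $\Z/p$-version (geometric fixed points for the bottom copy of $\Z/p$, taking $\Z/p^r$-spectra to $\Z/p^{r-1}$-spectra) $i$ times, at the cost of checking that the iterated construction agrees with $-\wedge S^{\infty V}$ for the representation $V$ named in the statement.
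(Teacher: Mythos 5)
The paper gives no proof of Lemma \ref{geomr}: it is stated with a \qed as a well-known analogue of Lemma \ref{lgeom}, which in turn was attributed to \cite{lms}, Sections II.8--9. Your argument is precisely the standard geometric-fixed-points/isotropy-localization proof that those references implement (identification of $S^{\infty V}$ with $\widetilde{E\mathcal{F}[\Z/p^i]}$ via fixed points of irreducibles, monoidality of $\Phi^{\Z/p^i}$, $\Phi\circ\mathrm{infl}\simeq\mathrm{id}$, iterated geometric fixed points, and joint conservativity of the $\Phi^H$), and it is correct — the only blemish is cosmetic, namely that your ``unit'' and ``counit'' labels for the two triangle identities are interchanged.
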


\qed

Note that this result is particularly well-suited to realizing staircase-shaped diagrams of $\Z/p^r$-equivariant
spectra of the form \rref{ezpreq}, which arise from applying isotropy separation
to any $\Z/p^r$-equivariant spectrum $X$. This is because the $\Z/p^r$-equivariant spectrum 
\beg{ecomponent}{\widetilde{E\mathcal{F}[\Z/p^i]}\wedge F(E\mathcal{F}[\Z/p^{i+1}]_+,X)}
by Lemma \ref{geomr}, by taking $\Z/p^i$-fixed points, corresponds to a $\Z/p^{r-i}$-equivariant spectrum,
which encodes all of its information. For
$i>0$, we have $r-i<r$, while for $i=0$, \rref{ecomponent} is just Borel cohomology. This allows us to
proceed by induction.  A subtle point is that in general, the $\Z/p^{r-i}$-equivariant
spectrum corresponding to \rref{ecomponent} by Lemma \ref{geomr} is {\em not}
just the Borel cohomology $F((E\Z/p^{r-i})_+,X)$. This is because in Lemma \ref{geomr}, the functor
from $\Z/p^{r-i}$-equivariant spectra to $S^{\infty V}$-modules really means smashing $S^{\infty V}$ 
with the {\em inflation} of a $\Z/p^{r-i}$-equivariant spectrum to $\Z/p^r$. In \rref{ecomponent},
on the other hand, $F(E\mathcal{F}[\Z/p^{i+1}]_+,X)$ is understood as a $\Z/p^r$-equivariant spectrum,
which is not an inflation. By a ``miracle" (connectivity of generators), in the case of cobordism, the 
isotropy-separated spectra \rref{ecomponent} are already complete for $i=1$ (see Appendix I and
\cite{hu}). We will see that in other examples
of interest, this is not the case, and an algebraic localization occurs in writing down the coefficients
of \rref{ecomponent}. 

\vspace{3mm}

\vspace{3mm}

In the case of Barsotti-Tate cohomology, considering the $p$-divisible group $\widetilde{G}$ as in
Section \ref{sbt}, 
consider the short exact sequence
$$0\r ({}_{p^r}\widetilde{G})_0\r {}_{p^r}\widetilde{G}\r ({}_{p^r}\widetilde{G})_{et}\r 0.$$
Let $({}_{p^r}\widetilde{G})_{(i)}$ be the pullback 
$$
\diagram
({}_{p^r}\widetilde{G})_{(i)}\rto \dto & ({}_{p^i}\widetilde{G})_{et}\dto\\
{}_{p^r}\widetilde{G}\rto & ({}_{p^r}\widetilde{G})_{et}.
\enddiagram
$$
We let the ring $R_i$, $i=0,\dots, r$ be the coefficient ring of $({}_{p^r}\widetilde{G})_{(i)}$ localized away
from ${}_{p^{i-1}}\widetilde{G}$ (in the case $i=0$, 
there is no localization). No completion is needed here, since the rings in question are already complete.
The ring $R_{i}^\prime$ is the coefficient ring of  
$({}_{p^r}\widetilde{G})_{(i)}$, localized away
from ${}_{p^{i}}\widetilde{G}$, $i=0,\dots, r-1$.

Algebraically, ${}_{p^r}\widetilde{G}_{(i)}$ and ${}_{p^r}\widetilde{G}_{(i+1)}\smallsetminus
{}_{p^i}\widetilde{G}$ are open sets in the affine scheme ${}_{p^r}\widetilde{G}_{(i+1)}$ whose
intersection is ${}_{p^r}\widetilde{G}_{(i)}\smallsetminus {}_{p^i}\widetilde{G}$:
$$
\diagram
{}_{p^r}\widetilde{G}_{(i)}\rto^\subset & {}_{p^r}\widetilde{G}_{(i+1)}\\
{}_{p^r}\widetilde{G}_{(i)}\smallsetminus {}_{p^i}\widetilde{G}\uto^\subset
\rto_\subset  &
{}_{p^r}\widetilde{G}_{(i+1)}\smallsetminus
{}_{p^i}\widetilde{G}\uto^\subset
\enddiagram
$$
Thus, we have a staircase-shaped diagram of affine schemes
\beg{eelliptic1000}{
\resizebox{0.9\hsize}{!}{\diagram
{}_{p^r}\widetilde{G} &\dots & {}_{p^r}\widetilde{G}\smallsetminus {}_{p^i}\widetilde{G} &
{}_{p^r}\widetilde{G}\smallsetminus {}_{p^{r-1}}\widetilde{G}\\
\vdots &&{}_{p^r}\widetilde{G}_{(i+1)}\smallsetminus {}_{p^i}\widetilde{G}&\dots\lto\uto\\
{}_{p^r}\widetilde{G}_{(i)} &
{}_{p^r}\widetilde{G}_{(i)}\smallsetminus {}_{p^{i-1}}\widetilde{G} &
{}_{p^r}\widetilde{G}_{(i)}\smallsetminus {}_{p^{i}}\widetilde{G}\lto\uto &\\
{}_{p^r}\widetilde{G}_{(0)} &\dots\uto\lto &&
\enddiagram
}
}
where the unconnected terms are colimits of the parts of the diagram restricted to the area between
the given row and column.
In other words, the colimits of the partial staircase diagrams of these schemes obtained by omitting an upper right
section are ${}_{p^r}\widetilde{G}_{(i)}$. Similarly, one sees that if we omit 
a lower left section of the staircase diagram, the colimit of the resulting  truncated staircase diagram is ${}_{p^r}\widetilde{G}\smallsetminus {}_{p^i}\widetilde{G}$. Note that all these schemes are affine.
Our task is to find a $\Z/p^r$-equivariant spectrum $X$ such that the coefficients of the fixed points
in the diagram \rref{ezpreq} are the coefficient rings of the affine schemes \rref{eelliptic1000}.

Topologically, our approach to constructing the equivariant spectrum $X$ is as follows: Note that
a $\Z/p^r$-equivariant spectrum $Y$, has a canonical action of $\Z/p^r$ by morphisms of $\Z/p^r$-equivariant 
spectra. 
We shall construct inductively $\Z/p^r$-equivariant spectra $\mathcal{R}=\mathcal{R}_{\Z/p^r}$
whose naive $\Z/p^r$ group action extends to a $\Z/p^\infty$-action by $\Z/p^r$-equivariant morphisms. For $r=0$,
this naive action is trivial. 

Thus, the
constituent spectrum \rref{ecomponent} can be constructed by taking $(\mathcal{R}_{\Z/p^i})^{\Z/p^i}$
with its naive $\Gamma=(\Z/p^\infty)/(\Z/p^i)$-action, take Borel cohomology with respect to $\Z/p^{r-i}
\subset\Gamma$, localize away from ${}_{p^r}\widetilde{G}_{(i-1)}$, and then smash with 
$\widetilde{E\mathcal{F}[\Z/p^{i+1}]}$ (see Lemma \ref{geomr}). All of the structure is recovered, the
``staircase-shaped" maps \rref{ggeomr} are realized by functoriality, and
the construction can be carried out $E_\infty$.

\vspace{3mm}
In the case of elliptic cohomology (for $p>3$), we can proceed similarly, with the caveat that the rings 
$\mathcal{O}_{{}_{p^r}E}$
are not complete, and hence for $R_i$, we must take
$$(\mathcal{O}_{{}_{p^r}E})^\wedge_{{}_{p^i}E},$$
while $R^\prime_i$ is the localization of $R_i$ away from ${}_{p^{i-1}}E$. 

On the algebraic
side, we observe similarly as in Section \ref{sell} that 
$${}_{p^r}E\subset U$$
(for the same reason). Thus, we can construct the rings $\mathcal{O}_{{}_{p^r}E}$ as
quotients of the ring $R_0$ of \rref{er0} by suitable ideals. Next, we observe that those ideals are
principal by the fact that $Pic_0(E)=E$, and the fact that the sum of all $p^r$-torsion points of $E$ is $0$.
Then the proof that the diagram
\beg{ediagpr}{\diagram
({}_{p^r}E)^\wedge_{{}_{p^i}E} & ({}_{p^r}E)^\wedge_{{}_{p^i}E}\smallsetminus {}_{p^{i-1}}E\lto\\
({}_{p^r}E)^\wedge_{{}_{p^{i-1}}E}\uto &
({}_{p^r}E)^\wedge_{{}_{p^i}E}\smallsetminus {}_{p^{i-1}}E\uto\lto
\enddiagram
}
is strictly Cartesian on coefficients proceeds the same way as the proof of Theorem \ref{t1}. (Using
the results of Greenlees 
\cite{greenleestate}, it can be alternately deduced simply from the fact that there is no
infinite torsion with respect to the generator of the ideal of ${}_{p^i}E$, which 
follows from the fact that the rings involved are Noetherian.) 

On the topological side, we proceed similarly as in the Barsotti-Tate case, constructing $\Z/p^r$-equivariant
spectra $\mathcal{E}_{\Z/p^r}$
whose naive $\Z/p^r$-equivariant structure extends to a naive $\Z/p^\infty$-equivariant structure.
Again, for $r=0$, both actions are trivial. For the constituent $\Z/p^r$-equivariant spectra \rref{ecomponent},
again, we take the $\Z/p^{r-i}$-Borel cohomology of $(\mathcal{E}_{\Z/p^i})^{\Z/p^i}$, localized away
from the generator $w$ of the ideal of ${}_{p^{i-1}}E$. (Again, here we consider 
$\Z/p^{r-i}\subset (\Z/p^\infty)/(\Z/p^i)$. The computation of Borel cohomology involves the fact
that the coefficient ring of ${}_{p^r}E$ has no higher $v$-torsion where $v$ is the generator of
the ideal corresponding to ${}_{p^i}E$. This is due to the fact that the torsion subgroups of elliptic curves
are reduced schemes. Note that the fact that we are taking Borel cohomology here is what forces us to take
the completion in \rref{ediagpr}.

\vspace{3mm}
Thus, we have

\vspace{3mm}
\begin{theorem}\label{tbt10}
There exists a $\Z/p^r$-equivariant $E_\infty$-ring spectrum $\mathcal{R}$ with coefficients
$\mathcal{O}_{{}_{p^r}\widetilde{G}}[u,u^{-1}]$ where $\widetilde{G}$ is the $p$-divisible group
of Theorem \ref{tbl} over the ring \rref{ehkcoeff}, which is complex-oriented, realizing the $\Z/p^r$-equivariant
formal group law of Theorem \ref{tpdiv}.

For the elliptic curve $E$ mentioned in Section \ref{sell}, there exists a complex oriented
$\Z/p^r$-equivariant commutative
ring spectrum $\mathcal{E}$ 
with coefficients $\mathcal{O}_{{}_{p^r}E}[u,u^{-1}]$, realizing the $\Z/p^r$-equivariant formal group law 
associated with $E$ by the construction of Strickland \cite{stricmult}.
\end{theorem}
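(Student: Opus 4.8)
The plan is to assemble $\mathcal{R}$ (resp.\ $\mathcal{E}$) as the homotopy limit of a staircase diagram of the form \rref{ezpreq}, obtained by $\Z/p^r$-equivariantly realizing the diagram of affine schemes \rref{eelliptic1000} on coefficient rings. The whole construction proceeds by induction on $r$, exactly as sketched in Section \ref{szpr}: having built $\mathcal{R}_{\Z/p^i}$ for $i<r$ together with the auxiliary $\Z/p^\infty$-action by $\Z/p^r$-equivariant morphisms, one constructs the constituent spectrum \rref{ecomponent} for each $i$ by taking $(\mathcal{R}_{\Z/p^i})^{\Z/p^i}$, forming its $\Z/p^{r-i}$-Borel cohomology (with respect to $\Z/p^{r-i}\subset(\Z/p^\infty)/(\Z/p^i)$), localizing away from ${}_{p^{i-1}}\widetilde{G}$ (resp.\ from the generator $w$ of the ideal of ${}_{p^{i-1}}E$), and finally smashing with $\widetilde{E\mathcal{F}[\Z/p^{i+1}]}$; Lemma \ref{geomr} guarantees this is the correct $\Z/p^r$-equivariant object. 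The ``staircase'' maps \rref{ggeomr} are then induced by functoriality, and the homotopy limit of the assembled diagram is the desired spectrum. For $r=1$ this is precisely the content of Sections \ref{sequivconst}--\ref{sco} (elliptic case) and Section \ref{sbt} (Barsotti-Tate case), so those serve as the base of the induction.

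For the Barsotti-Tate spectrum $\mathcal{R}$ the $E_\infty$ structure comes for free at every stage: the rings $R_i$, $R_i'$ are finitely generated free (or localizations thereof) over \rref{ewdef}, so the non-equivariant building blocks admit $E_\infty$-ring structures by Goerss--Hopkins \cite{gh}, localization and the various smash and mapping-space constructions can all be performed in the $E_\infty$-category, and Lemma \ref{geomr} transports these rigidly to the $\Z/p^r$-equivariant category. Realizing the staircase maps $E_\infty$ and passing to the homotopy limit then yields $\mathcal{R}$ as a $\Z/p^r$-equivariant $E_\infty$-ring spectrum; its coefficients are the limit over \rref{eelliptic1000}, which is $\mathcal{O}_{{}_{p^r}\widetilde{G}}[u,u^{-1}]$ by the pullback/strictly-Cartesian analysis. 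Complex orientation is obtained as in Section \ref{sco}, by mapping in the isotropy-separated diagram of $MU_{\Z/p^r}$ of \rref{ezpreq} and checking on the upper-right corner via the explicit description of $(\widetilde{E\mathcal{F}[\Z/p^i]}\wedge F(E\mathcal{F}[\Z/p^{i+1}]_+,MU_{\Z/p^r}))_*$ from \cite{hu}; identification of the resulting equivariant FGL with that of Theorem-Construction \ref{tpdiv} is then a matter of tracking the parameter $z$ and the maps \rref{ezaza} through the construction.

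For the elliptic spectrum $\mathcal{E}$ we cannot claim $E_\infty$ (the extra generators $g_2,g_3$ are not in degree $0$), so we build it only as a commutative associative ring spectrum, by the homotopy-pullback-is-weakly-monoidal argument used for $\mathcal{E}_{\Z/p}$ in Section \ref{sequivconst}: one realizes the staircase diagram of \rref{ediagpr} by ring maps, notes the indeterminacy in lifting maps of diagrams lands in odd-degree cohomology groups (which vanish here because everything in sight is concentrated in even degrees), and similarly for the triple-smash coherence, concluding that the homotopy limit inherits a commutative associative ring structure. The main point still requiring care is that \rref{ediagpr} is strictly Cartesian on coefficients: this follows by the argument of Theorem \ref{t1}, using that the $p^r$-torsion subschemes of $E$ are reduced (so there is no infinite torsion with respect to the generator of the ideal of ${}_{p^i}E$) and that the rings are Noetherian, and alternatively can be read off from Greenlees's local-homology Tate square \cite{greenleestate}. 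Finally, the coefficients of the homotopy limit are $\mathcal{O}_{{}_{p^r}E}[u,u^{-1}]$, and complex orientation together with identification of the FGL with Strickland's \cite{stricmult} follows as in Theorem \ref{tefgl}.

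The step I expect to be the main obstacle is the inductive bookkeeping of the \emph{localizations} appearing in \rref{ecomponent}: unlike the cobordism case, where connectivity of generators makes the isotropy-separated pieces already complete at $i=1$, here one genuinely must localize the $\Z/p^{r-i}$-Borel cohomology of $(\mathcal{R}_{\Z/p^i})^{\Z/p^i}$ (resp.\ $(\mathcal{E}_{\Z/p^i})^{\Z/p^i}$) away from the correct torsion subscheme, and one must verify at each stage that no \emph{higher} torsion with respect to that generator is introduced (which is where reducedness of the torsion subgroups of $p$-divisible groups / elliptic curves enters decisively), so that the Borel cohomology spectral sequence collapses to give the stated coefficient rings without derived-functor corrections. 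Keeping the auxiliary $\Z/p^\infty$-actions coherent through all these operations — so that the induction can even be stated — is the other delicate point.
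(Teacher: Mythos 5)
Your proposal reproduces, essentially step by step, the construction the paper itself carries out in Section \ref{szpr}: the inductive build of $\mathcal{R}_{\Z/p^i}$ (resp.\ $\mathcal{E}_{\Z/p^i}$) carrying an auxiliary $\Z/p^\infty$-action, the use of Lemma \ref{geomr} to realize the isotropy-separated pieces \rref{ecomponent} from $\Z/p^{r-i}$-Borel cohomology followed by a localization, the assembly along the staircase \rref{ggeomr}/\rref{eelliptic1000} and identification of coefficients via strict Cartesian-ness of \rref{ediagpr}, the $E_\infty$ upgrade in the Barsotti--Tate case via Goerss--Hopkins, and the homotopy-pullback-is-weakly-monoidal argument (with vanishing odd-degree indeterminacy) in the elliptic case. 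This is the paper's own argument, correctly rendered; no further comparison is needed.
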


\qed

\vspace{3mm}
Note that all of our constructions of the 
$\Z/p^r$-equivariant formal group law, and $\Z/p^r$-equivariant ring spectrum realizing this equivariant
formal group law, are determined by the ring $\mathcal{O}_{{}_{p^r}\widetilde{G}}$, and the ``Euler class" 
$x\in \mathcal{O}_{{}_{p^r}\widetilde{G}}$ corresponding to the generator of the character group $(\Z/p^r)^*$.
This is not surprising, of course, since for a finite $p$-group $\Gamma$ over $R$ which has a short exact 
sequence
$$0\r \Gamma_{(0)}\r \Gamma\r \Gamma_{et}\r 0$$
where $\Gamma_{(0)}$ is the connected part and $\Gamma_{et}$ is the \'{e}tale part is split, $\Gamma$ is split if and only 
if the $0$-section has a point over the non-zero components of $\Gamma_{et}$.

\vspace{3mm}
In the case of an elliptic curve over a complete Noetherian local ring in mixed characteristic with perfect residue field, 
the equivariant formal group laws and spectra constructed here are 
trivial: if the curve is supersingular, the $p$-divisible group of $p$ power torsion points is formal. If the curve
is ordinary, then its connected and \'{e}tale parts both have height $1$. Thus, the \'{e}tale part splits over
an unramified extension, while the formal part splits over a cyclotomic totally ramified extension. Thus, the splitting
Galois group is abelian, and the extension of the \'{e}tale part by the formal part is split. 

\vspace{3mm}
On the other hand, by \cite{deu}, for every prime $p>2$, there exists an elliptic curve $E$ whose group of 
points over $\mathbb{F}_p$ is $\Z/p$ (and thus is ordinary). Since for $p>7$, no rational curve
has $p$-torsion by Mazur's theorem, the group of $p$-torsion points of $E$ does not split over $\Z_{(p)}$ (
note that since
an elliptic curve is a projective variety, points over $\Q$ are the same thing as points over $\Z$).

\vspace{3mm}
Back to $p$-divisible groups, it is worth noting that we obtain no non-trivial equivariant spectra
from the ring of integers of $p$-adic complex numbers
$\mathcal{O}_{\mathbb{C}_p}$. By the classification of Scholze-Weinstein \cite{sw}, Theorem B, in this
case, a $p$-divisible group $G$ of dimension $d$ and height $h$
is classified by a rank $h$ free $\Z_p$-module $T\cong \Z_p^h$, and
a $d$-dimensional $\C_p$-vector subspace of $T\otimes_{\Z_p}\C_p$. The $T$ corresponds to the
Tate module of $G$, $W$ to its Lie algebra. The map is obtained from the fact that by duality, an
element of the Tate module of the Cartier dual 
$G^\prime$ determines a map $G\r \mathbb{G}_m(p)$, whose $p$-adic logarithm gives a map $Lie(G)
\r \C_p$.

Now by functoriality, $G_0$ is classified by the pair $(W,T_0)$ where $T_0\subseteq T$ is the $\Z_p$-submodule
$$\bigcap \{U\subseteq T\otimes_{\Z_p}\Q_p\mid W\subseteq U\otimes_{\Q_p} \C_p\}\cap T.$$
Now $G_0$ is a formal group (its Rapoport-Zink parameters are obtained by a reduced row echelon form
basis of $W$ whose entries have non-negative valuation), and since $T_0$ is a direct summand of $T$, $G_0$ is 
a direct summand of $G$, and $G/G_0$ is in fact \'{e}tale, and isomorphic to copies of the \'{e}tale part of
$\G_m$.

\section{Further observations and examples}\label{sfurth}

An instructive example of a $p$-divisible group $G=G^\prime$ is a non-trivial \'{e}tale extension of the 
multiplicative group. For a commutative $\Z[q,q^{-1}]$-algebra $R$, its coordinate ring is the inverse limit over $n$ of
\beg{etate1}{\prod_{i=0}^{p^n-1}R[t]/(t^{p^n}-q^i).
}
If we denote by $f_i$ the idempotent corresponding to the $i$th factor, the group law is given by 
\beg{etate111}{\psi(f_it)=\sum_{j=0}^{i}f_jt\otimes f_{i-j}t +\sum_{j=i+1}^{p^n-1}f_jt\otimes f_{p^n-i+j}t/q.}
As explained in Katz-Mazur \cite{katzmazur}, Chapter 8, this group can be used
to describe
the $p^n$ torsion on the Tate curve, and thus is related to our earlier discussion of elliptic cohomology.

The ring $R$ is not required to be complete, but the discussion of Chapter \ref{spdiv} applies, since we have the
exact sequence  \rref{espdiv}. In particular, the coordinate ring of the group ${}_{p^r}G$ is \rref{etate1}
with $r=n$ (where to prevent confusion, we replace $t$ by another variable $s$):
\beg{etate2}{\mathcal{O}_{{}_{p^r}G}=\prod_{i=0}^{p^r-1}R[s]/(s^{p^r}-q^i).}
We shall also denote by $e_i$ the idempotents in \rref{etate2} corresponding to $f_i$. 
On the other hand, the coordinate ring of
$G^{\prime\prime}$ is the inverse limit over $n$ of
$$\prod_{i=0}^{p^r-1}R[t]/(t^{p^n}-q^{ip^{n-r}}).$$
What is the equivariant formal group law associated with this group? First, we need to choose the parameter 
$z$. We can choose
\beg{etate3}{z=f_0(t-1)+\sum_{i=1}^{p^r-1}f_i\alpha_i
}
for some unit $\alpha_i\in (R[s]/(s^{p^r}-q^i))^\times$. (Here we should think $s=t^{p^{n-r}}$.)
We will then have
\beg{etate4}{u_j=(s^j-1)f_0+\sum_{i=1}^{p^r-1} [j]\alpha_{i}f_{ij}
}
for $j=1,\dots,p^r-1$, where the multiplication $ij$ in the subscript 
is performed in $\Z/p^r$ and $[j]$ denotes the $j$-multiple
in ${}_{p^r}G$.

Note that these $\Z/p^r$-equivariant formal group laws are not multiplicative in the sense of \cite{cgk},
since that implies $[p^r]u_i=0$, which we see from \rref{etate4} does not hold, even though it is true
non-equivariantly (or upon completing at $u=u_1$).

For illustration, let us compute explicitly the equivariant formal group law coproduct in the case $p=2$, $r=1$. 
Let us choose $\alpha_1=-1$, i.e.
\beg{etate20}{x=z=f_0t-1.}
Now \rref{etate111} implies 
\beg{etate21}{x_\alpha=e_0f_0st+\frac{e_1f_1st}{q}-1.}
Putting $A_0=\Z[q,q^{-1}]$, we then have
\beg{etate22}{\begin{array}{l}\Xi=(\mathcal{O}_{\Gamma\times G})^\wedge_{(xx_\alpha)}=\\[1ex]
e_0f_0A_0[s]/(s^2-1)[[x]]\prod e_1f_0A_0[s]/(s^2-q)[[x]]\\[1ex]
\prod e_1f_1A_0[s]/(s^2-q)[[x_\alpha]]
\end{array}}
(to make the formulas less cluttered, we will omit all hats). Now by \rref{etate20},
we have
\beg{etate23}{\psi(x)=f_0t\otimes f_0t+\frac{f_1t\otimes f_1t}{q}-1\otimes 1.
}
Comparing with \rref{etate22}, on the $f_0\otimes f_0$ coordinates, 
we just get the formula for the multiplicative FGL, i.e.
\beg{etate24}{(x+1)\otimes (x+1)-1\otimes 1.}
Further, on the $f_0\otimes f_1$ or $f_1\otimes f_0$ coordinates, only the last term \rref{etate23} 
contributes, so \rref{etate24} continues to be correct. On the $f_1\otimes f_1=e_1f_1\otimes e_1f_1$-coordinate,
(the equality comes from \rref{etate22}), we get the term
\beg{etate25}{\frac{t\otimes t}{q}-1\otimes 1,}
while in the $e_1f_1$-coordinate, $x_\alpha$ is equal to
\beg{etate26}{\frac{st}{q}-1.}
Using also the equality $s^2=q$, on the $e_1\otimes e_1$-coordinate we get that \rref{etate25} is equal to
$$x_\alpha\otimes x_\alpha+x_\alpha\otimes 1+1\otimes x_\alpha.$$
Thus, the equivariant formal group law multiplication on $\Xi$ is given by
$$\psi(x)=(x+1)\otimes (x+1)-(1\otimes 1)+e_1(x_\alpha+1)\otimes e_1(x_\alpha+1)$$
with the first two summands coinciding with the equivariant multiplicative FGL, the 
last summand being the correction.

\vspace{3mm}
We can turn the equivariant formal group laws coming from the above group $G$ 
into $\Z/p^r$-equivariant spectra using the method of
Section \ref{szpr}. (Note that we need an additional element to serve as the Bott element.) 
The fixed point spectrum of this construction, however, is a module over the non-equivariant
K-theory (or connective k-theory) spectrum, depending on whether we invert the Bott element or not. 
Thus, the fixed point spectrum carries no chromatic height $>1$ information, even though
the group $G$ is height $2$ according to the classification of $p$-divisible groups.

\vspace{3mm}

The example brings out the fairly major role the choice of parameter \rref{etate3} appears to have
in the construction. Thus, we can ask to what extent the resulting equivariant spectra depend on
the choice of parameter. To this end, we must recall how reparametrizations work in equivariant formal 
group laws and complex-oriented
equivariant spectra \cite{cgk,cgk1}. Recall first that non-equivariantly, the formal
group law of a complex-oriented ring spectrum is uniquely determined up to {\em strict isomorphism},
which means that the two formal group laws are related by a reparametrization of the form
\beg{etate5}{x+\sum_{i>1} a_ix^i.
}
Two complex orientations of the spectrum are strictly isomorphic.
The coefficient of the linear term is fixed by the requirement that the Thom class of the
universal complex line bundle on $\C P^\infty$ restrict to the element $1\in \widetilde{E}^2\C P^1=E_0$.

$G$-equivariantly (where $G$ is an abelian compact Lie group), this remains true when interpreted properly.
The complex orientation is a Thom class 
$$\tau\in \widetilde{E}^2(\C P^\infty_G)^{\gamma^2_G},$$
which means that we require that it restrict to generators of the rank $1$ free $E_0$-modules
\beg{etate6}{\tau_\gamma\in\widetilde{E}^2 S^{\gamma}= E_{\gamma-2}}
for every $1$-dimensional complex $G$-representation $\gamma$. Thus, a ``strictness" requirement
is imposed once free generators $\tau_\gamma$ in \rref{etate6} are fixed for $\gamma\ncong 1$, which is a part of
the structure of a complex-oriented equivariant spectrum. 

These choices are not arbitrary. For example, 
for a Borel cohomology spectrum $E=F(EG_+,Z)$ for a non-equivariant complex-oriented
spectrum $Z$, they are determined. However, consider a $\Z/p^r$-equivariant spectrum $E$ which is obtained
from a $\Z/p^{r-i}$-equivariant spectrum $E^\prime$ via the adjunction of Lemma \ref{geomr}.
Then, by definition, the stable homotopy class
$$s_\gamma:S^0\rightarrow S^{\gamma}$$
given by inclusion of fixed points is invertible whenever $\gamma|_{\Z/p^{r-i}}$ is trivial.
The corresponding Euler class is given by  
$$u_\gamma=\tau_\gamma\circ s_\gamma,$$
and thus, is invertible.  

Since the $\Z/p^r$-equivariant spectrum associated with the group $\mathcal{O}_{{}_{p^r}G}$ of \rref{etate1}
is, by construction, a wedge sum of spectra of the type just discussed, we see that a choice of a parameter $z$
determines a compatible choice of complex stability isomorphisms \rref{etate6} 
(not necessarily including every such choice). Similar comments apply
to the spectra constructed in Section \ref{szpr}. In this sense, the $\Z/p^r$-equivariant complex-oriented 
spectrum
constructed is, in fact, intrinsic to the $p$-divisible group, and does not depend on the choice of parameter.

\vspace{3mm}

All the equivariant spectra we have constructed so far are direct
(i.e. wedge) sums of terms corresponding to the higher diagonal in diagram \rref{eelliptic1000}. This is, in fact,
not surprising, since we have constructed our spectra by spectrification of the coefficients, and 
in an \'{e}tale $p^r$-torsion finite group scheme, the filtration by the $p^i$-torsion subgroups 
splits into a disjoint union. It may, however, raise the question whether all $\Z/p^r$-equivariant spectra whose
equivariant formal group laws reduce to multiplicative or Lubin-Tate non-equivariantly (or their corresponding
equivariant spectra) have such a splitting
property. 

This is certainly false. For example, it follows from our observations in Section \ref{szpr}
that if $E_*$ is a ring obtained from $MU_*$ by killing a regular sequence and localizing and 
$$[p]x:E^*[[x]]\rightarrow E^*[[x]]$$
is injective, then the regular sequence and localization lifts to a regular sequence in $MU_*^{\Z/p^r}$, thereby giving
rise to a $MU_*^{\Z/p^r}$-module $E_*^{\Z/p^r}$ and, when
$E_*$ has no torsion, to a corresponding $\Z/p^r$-equivariant complex-oriented spectrum $E_{\Z/p^r}$.
It further follows from the results of \cite{hu} that
in such a case, the scheme $Spec(E_*^{\Z/p^r})$ is connected. These rings are, however, 
non-Noetherian.

\vspace{3mm}

Nevertheless, there are examples which are Noetherian, although their construction
involves the full force of the results of \cite{hu}, which are too technical
to reproduce here. For this reason, we restrict attention here to the group $\Z/2$, 
in which case, we have the ``classical" result
\beg{ez22cob}{\begin{array}{l}MU^{\Z/2}_*=\\[1ex]
MU_*[u, q_j, b_{ij}\mid i\geq 1, 
j\geq 0]/(q_0,q_i-c_i-uq_{i+1}, b_{ij}-a_{ij}-ub_{i,j+1})
\end{array}}
where 
$$x+_Fy=\sum_{i,j} a_{ij}x^iy^j,\; [2]x=\sum_{j}c_jx^j.$$
This is a minor modification of the presentation of Strickland \cite{strick}, and is
readily proved by the same
methods (the advantage being that the variables $q_j$ are separated from the $b_{ij}$). In Borel cohomology, we
have
\beg{ez22cob1}{q_j\mapsto \sum_{k\geq j} c_ku^{k-j}}
\beg{ez22cob1a}{
b_{ij}\mapsto \sum_{k\geq j} a_{ik}x^iu^{k-j}
}
With these conventions, we see that in the multiplicative formal group law on 
$$\Z[u]/(u(u+2)),$$ 
we have
\beg{ez22cob2}{a_{11}=1, \; a_{ij}=0 \text{ for $i>1$ or $j>1$}
}
\beg{ez22cob3}{b_{11}\mapsto 1, b_{ij}\mapsto 0 \text{ for $i>1$ or $j>1$},
}
\beg{ez22cob4}{q_1\mapsto u+2,\; q_2\mapsto 1,\; q_j\mapsto 0 \text{ for $j>2$}.
}
Now we can construct a $\Z/2$-equivariant formal group law on the Noetherian ring
\beg{ez22cob5}{A=\Z[u,w]/(u(u+2)(1-uw)),
}
whose spectrum (in the sense of algebraic geometry)
is connected. To this end, we keep the relations \rref{ez22cob2}, \rref{ez22cob3},
and send 
\beg{ez22cob6}{q_k\mapsto -u(u+2)w^{k} \text{ for $k\geq 3$}
}
(then $q_1,q_2$ are determined). The $\Z/2$-equivariant formal group law thus constructed is
not multiplicative in the sense of \cite{cgk}, but its underlying non-equivariant formal group 
law is multiplicative. 

This equivariant formal group law can be figured out explicitly as follows:
Let us first discuss its structure when we change scalars to the ring $u^{-1}A$. To this end,
let us recall that a $\Z/2$-equivariant formal group law on a ring $B$ where the Euler class is
a unit is on a ring of the form
\beg{ez22cob50}{B[[z]]\prod B[[z]].}
Denote the idempotents corresponding to the two factors \rref{ez22cob50} by $\iota_0,\iota_1$.
Then the formal comultiplication is given by
\beg{ez22cob51}{\begin{array}{l}
\psi(\iota_0z)=(\iota_0\otimes\iota_0)(z_1+_Fz_2)+(\iota_1\otimes\iota_1)(z_1+_Fz_2)\\[1ex]
\psi(\iota_1z)=(\iota_0\otimes\iota_1)(z_1+_Fz_2)+(\iota_1\otimes\iota_0)(z_1+_Fz_2)
\end{array}
}
where $z_1=z\otimes 1$, $z_2=1\otimes z$. The choice of parameter is given by 
\beg{ez22cob52}{\begin{array}{l}
x=(z,u+\sum_{i\geq 1} b_iz^i),\\[1ex]
x_\alpha=(u+\sum_{i\geq 1} b_iz^i,z).
\end{array}
}
The representing object is, in fact, $MU_*[u,u^{-1},b_i\mid i\geq 1]$. In our case, we have
$b_1=1+u$, $b_i=0$ for $i>1$, so \rref{ez22cob52} turns into
\beg{ez22cob53}{\begin{array}{l}
x=(z,u+(1+u)z),\\[1ex]
x_\alpha=(u+(1+u)z,z).
\end{array}
}
Since the $q_i$s are not a part of this picture, one may ask how could the equivariant formal
group fail to be multiplicative? Computing this out explicitly using \rref{ez22cob53}, however, reveals that
\beg{ez22cob54}{x\otimes x+x\otimes 1+1\otimes x -\psi(x)=u(u+2)\iota_1(z+1)\otimes \iota_1(z+1).
}
Thus, we see that for any ring with the Euler class inverted, the $\Z/2$-equivariant formal group law
is multiplicative if and only if $u(u+2)=0$, which is not our case. (This is, of course, also proved in 
\cite{cgk}.) The correction is given by the
right hand side of \rref{ez22cob54}.

To evaluate that term, let us first note from \rref{ez22cob53} that
\beg{ez22cob55}{xx_\alpha=z^2(1+u)+zu.
}
When $u$ is a unit, setting $t=xx_\alpha$, we have in fact an inverse series
\beg{ez22cob56}{z=\phi(t,u).
}
Schematically, we can write
$$\phi(t,u)=\frac{-u+\sqrt{u^2+4(1+u)t}}{2(1+u)},
$$
and thus the coefficients can be expressed by the Newton formula. 

Now by \rref{ez22cob53}, we have 
\beg{ez22cob57}{\iota_1(z+1)=(0,z+1)=u^{-1}(x-\phi(xx_\alpha,u)).
}
This can be used to express the correction term \rref{ez22cob54}. However, because of our factor
$u(u+2)$, we can also use the identity $w=u^{-1}$. In fact, making this substitution, \rref{ez22cob55}
becomes
$$wxx_\alpha=z^2(w+1)+z,$$
from which we see that the series $\phi(t,w^{-1})$ is integral in $t,w$, and hence, so is the series
$$\rho(x,x_\alpha,w)=w(x-\phi(xx_\alpha,w^{-1})).$$
Therefore, the identity
\beg{z2cob58}{
\psi(x)=x\otimes x+1\otimes x+x\otimes 1 -u(u+2)\iota_1\rho(x,x_\alpha,w)\otimes 
\iota_1\rho(x,x_\alpha,w)
}
can be written over the ring $A$, and hence is then valid by inverting and completing at $u$ (which 
eliminates $\iota_1$).

\vspace{3mm}

Additionally, (omitting the Bott element from the notation), it is not difficult to construct
a complex oriented $\Z/2$-equivariant spectrum with this formal group law. Multiplying the relation by 
the Bott element, the element $w$ can be in degree $0$, and thus can be attached so that
$MU_{\Z/2}[u]$ is a $\Z/2$-equivariant $E_\infty$ ring. Then the relation 
$u(u+2)(1-uw)$ together with the non-equivariant complex cobordism polynomial generators $x_i$, $i>1$
form a regular sequence ($x_1$ is not killed, going to the Bott element). The relations
regarding $b_{ij}$, $q_j$ cannot be arranged into a regular sequence, but this can be
circumvented by the following trick: Selecting $k\geq 3$, the elements
$$q_k+u(u+2), b_{1k},b_{2k},\dots$$
(together with the elements mentioned above) do form a regular sequence,
and one can take the homotopy colimit (telescope) of the resulting spectra with $k\r\infty$.

\vspace{3mm}
For Lubin-Tate formal group laws on non-equivariant complete rings $E_{n*}$, the Borel cohomology equivariant
Lubin-Tate law on
$$E_{n*}[[u]]/([2]u)$$
can be similarly deformed to a $\Z/2$-equivariant formal group law on
\beg{ez22cob10}{E_{n*}[[u]][w]/(([2]u)(1-uw))
}
by keeping the relations \rref{ez22cob1a}, but deforming \rref{ez22cob1} to 
$$ q_j\mapsto \sum_{k\geq j} c_ku^{k-j} -w^j[2]u.$$
We can also construct $\Z/2$-equivariant complex oriented spectra with this coefficient ring
(neglecting the periodicity element from the notation) by the same method as in the multiplicative case.

\vspace{5mm}

\section{Appendix} \label{sapp}

In this section, we clarify the relationship between the isotropy separation diagram \rref{ezpreq}, which we
used to construct equivariant elliptic and Barsotti-Tate cohomology, with the calculation of $(MU_{\Z/p^r})_*$
given in \cite{ak}. In \cite{ak}, there is a ``staircase-shaped diagram" 
\beg{ezpreq100}{\resizebox{0.9\hsize}{!}{\diagram
& F(E\mathcal{F}[\Z/p^{i+1}]_+,\widetilde{E\mathcal{F}[\Z/p^i]}\wedge X)\dto\rto &\dots\\
\dots\rto & F(E\mathcal{F}[\Z/p^{i+1}]_+,\widetilde{E\mathcal{F}[\Z/p^i]}\wedge 
F(E\mathcal{F}[\Z/p^i]_+,\widetilde{E\mathcal{F}[\Z/p^{i-1}]}\wedge X)).&
\enddiagram}
}
However, it should be pointed out that a general $\Z/p^r$-equivariant spectrum $X$ is {\em not} the
homotopy limit of the diagram \rref{ezpreq100}. Rather, it is a homotopy limit of a larger ``cube-like"
diagram of which \rref{ezpreq100} is a part.

What happens for cobordism is that this homotopy limit has no higher derived functors, and therefore to calculate
the coefficients of $\Z/p^r$-equivariant cobordism, we can just take the limit of the coefficients of the spectra
in the cube-like diagram. This limit is then easily seen to be equal to the limit of the coefficients of the
spectra \rref{ezpreq100} for $X=MU_{\Z/p^r}$, i.e.
\beg{ezpreq1011}{\resizebox{0.8\hsize}{!}{\diagram\protect
& F(E\mathcal{F}[\Z/p^{i+1}]_+,\widetilde{E\mathcal{F}[\Z/p^i]}\wedge MU_{\Z/p^r})_{*}\rto\dto&\dots \\
\protect\dots\rto & 
\protect{\begin{array}{l}F(E\mathcal{F}[\Z/p^{i+1}]_+,\widetilde{E\mathcal{F}[\Z/p^i]}\wedge \protect
\\F(E\mathcal{F}[\Z/p^i]_+,\protect\widetilde{E\mathcal{F}[\Z/p^{i-1}]}\wedge\protect MU_{\Z/p^r}))_{*}.
\end{array}}&
\enddiagram}
}
In this case, the coeffients of the top term of \rref{ezpreq100} are
\beg{ezpreq1012}{MU_*[u_\alpha,u_{\alpha}^{-1},b^\alpha_j\mid
j\in \N, 1\leq \alpha<p^{i}]
[[u_{p^i}]]\\[1ex]
/[p^{r-i}]u_{p^i}.}
while the coefficients of the bottom term 
are
\beg{ezpreq1022}{(MU_*[u_\alpha,u_{\alpha}^{-1}, b_i^{\alpha}\mid 1\leq \alpha\leq p^{i-1}-1][[u_{p^{i-1}}]]
[u_{p^{i-1}}^{-1}]/
[p^{r-i+1}]u_{p^{i-1}})^\wedge_{([p]u_{p^{i-1}})}.}
(The completion symbol is actually missing in \cite{ak}.)

\vspace{3mm}

For any $\Z/p^r$-equivariant spectrum $X$, there is an obvious comparison map from diagram \rref{ezpreq}
to diagram \rref{ezpreq100}. In the case of $X=MU_{\Z/p^r}$, on coefficients, the map is injective.
It is proved in \cite{hu} that for connectivity reasons, this map on coefficients
is an isomorphism for $i\leq 1$. It is also an isomorphism on \rref{ezpreq1012} for $i=r$
(i.e., the
top term \rref{ezpreq1011}). Thus, 
for $r\leq 2$, the comparison map from Diagram \rref{ezpreq} to Diagram \rref{ezpreq100}
is an equivalence on fixed points. 

As it turns out, in all other cases, i.e. on \rref{ezpreq1012} with $2\leq i<r$ and on 
\rref{ezpreq1022} with $2\leq i\leq r$, the comparison map fails to be an isomorphism. 
The reason is that in those cases, \rref{ezpreq1012}, \rref{ezpreq1022} actually contain infinite
series in $u_{p^i}$ with unbounded $u_{p^{i-1}}$-denominators, which can be dimensionally compensated
by positive powers of $u_{p^j}$ with $j<i-1$. For $i\leq 1$, there are no such compensating terms.
A detailed proof of this is given in \cite{hu}.

Therefore,
in particular, while on coefficients, the diagram \rref{ezpreq} has no higher derived limits,
there are, in fact, non-zero higher derived limits of the diagram \rref{ezpreq1011} for $r>2$,
although its limit is still $(MU_{\Z/p^r})_*$. From this point of view, the diagram \rref{ezpreq}
is ``minimal" on coefficients, and therefore it is a simpler model of isotropy separation for $\Z/p^r$
than the diagram of \cite{ak}.

\vspace{10mm}

\vspace{5mm}

Po Hu

Department of Mathematics,
Wayne State University

1150, Faculty/Administration Bldg 

656 W Kirby St, Detroit, MI 48202

U.S.A.

pohu@wayne.com

\vspace{3mm}

Igor Kriz

Department of Mathematics,
University of Michigan

530 Church Street

Ann Arbor, MI 48109-1043

U.S.A.

ikriz@umich.edu

\vspace{3mm}

Petr Somberg

Mathematical Institute of MFF UK

Sokolovsk\'{a} 83, 18000 Praha 8

Czech Republic

somberg@karlin.mff.cuni.cz


\begin{thebibliography}{99}

\bibitem{ando} M.Ando: Operations in Complex- Oriented Cohomology Theories
Related to
Subgroups of Formal Groups, Ph.D. thesis, MIT, 1992

\bibitem{ak} W.C.Abram, I.Kriz: The equivariant complex cobordism ring of a finite
abelian group, {\em Math. Res. Lett.} 22 (2015) 1573-1588

\bibitem{ag} M.Ando, J.P.C.Greenlees: Circle-equivariant classifying spaces and the rational equivariant
$\sigma$ genus, {\em Math. Z.} 269 (2011) 1021-1104

\bibitem{ahs} M.Ando, M.J.Hopkins, N.P.Strickland: Elliptic spectra, the Witten genus and the theorem of the cube, {\em Invent. Math.} 146 (2001) 595-687

\bibitem{bl} M.Behrens, T.Lawson: {\em Topological automorphic forms}, Memoir AMS 958 (2010)

\bibitem{cgk} M.Cole, J.P.C.Greenlees, I.Kriz: Equivariant formal group laws, {\em Proc. London Math. Soc.} 81 (2000),
355-386

\bibitem{cgk1} M.Cole, J.P.S.Greenlees, I.Kriz: The universality of equivariant complex bordism, {\em Math Z.} 
239 (2002) 455-475

\bibitem{deu} M Deuring: Die Typen der Multiplikatorenringe elliptischer Funktionenkörper, 
{\em Abh. Math. Sem. Univ. Hamburg} 14 (1941), 197-272

\bibitem{ekmm} A. Elmendorf, I. Kriz, M. Mandell, P. May: 
{\em Rings, modules and algebras in stable homotopy theory}, 
AMS Mathematical Surveys and Monographs, Volume 47, 1997. \bibitem{gh} P.G.Goerss, M.J.Hopkins: Moduli spaces of commutative ring spectra, 
{\em Structured Ring Spectra} 151-200, London Math. Soc. Lecture Note Ser. 315, Cambridge Univ. Press, 2004

\bibitem{gepnermayer} D.Gepner, L.Meier: On equivariant topological modular forms, arXiv: 2004.10254

\bibitem{goerss} P.G.Goerss: Realizing families of Landweber exact homology theories, 
{\em New topological contexts for Galois theory and algebraic geometry} 
(BIRS 2008), 49-78, Geom. Topol. Monogr., 16, Geom. Topol. Publ., Coventry, 2009

\bibitem{grat} J.P.C. Greenlees: Rational $S^1$-equivariant elliptic spectra, {\em Topology} 44 (2005) no. 6, 1213-1279

\bibitem{greenleestate} J.P.C.Greenlees: Tate cohomology in commutative algebra, 
{\em J. Pure Appl. Algebra} 94 (1994), no. 1, 59-83




\bibitem{hw} B.Hanke, M.Wiemeler: An Equivariant Quillen Theorem, {\em Adv. Math.} 340 (2018), 48-75

\bibitem{hausmann} M.Hausmann: Global group laws and equivariant bordism rings, arXiv:1912.07583

\bibitem{hmwitt} L.Hesselholt, I.Madsen: On the K-theory of finite algebras over Witt vectors of perfect fields, 
{\em Topology} 36 (1997), no. 1, 29-101

\bibitem{hu} P.Hu: The coefficients of equivariant complex cobordism for primary cyclic groups, to appear

\bibitem{katzmazur} N.M.Katz, B.Mazur:
{\em Arithmetic moduli of elliptic curves},
Annals of Mathematics Studies, 108. Princeton University Press, Princeton, NJ, 1985

\bibitem{zp} I.Kriz: The $\Z/p$-quivariant complex cobordism ring, {\em Homotopy invariant
algebraic structures (Baltimore, MD, 1998)} 217-223, Contemporary Math. 239, AMS, 1999

\bibitem{krizbg} I.Kriz: Morava K-theory of classifying spaces: some calculations, {\em Topology}
36 (1997) 1247-1273

\bibitem{klee} I.Kriz, K.P.Lee: Odd-degree elements in the Morava $K(n)$-cohomology of 
finite groups,
{\em Topology Appl.} 103 (2000), no. 3, 229-241

\bibitem{klu} I.Kriz, Y.Lu: On the structure of equivariant formal group laws, in preparation

\bibitem{land} P.Landweber: Homological properties of comodules over $MU_*MU$ and $BP_*BP$,
{\em Amer. J. Math.} 98 (1976), 591-610

\bibitem{lms} L. G. Lewis, Jr., J. P. May, M. Steinberger, and J. E. McClure: 
{\em Equivariant stable homotopy theory}, Vol 1213 of Lecture Notes in Mathematics,
Springer-Verlag, Berlin, 1986.

\bibitem{lt} J.Lubin, J.Tate: Formal moduli for one-parameter formal Lie groups, {\em Bull. Soc. Math. France}
94 (1966) 49-59

\bibitem{lurie} J.Lurie: Elliptic Cohomology I,II,III, www.ias.edu/$\sim$lurie

\bibitem{lurie1} J.Lurie: Survey Article on Elliptic Cohomology, www.ias.edu/$\sim$lurie

\bibitem{matsu} H.Matsumura: {\em Commutative ring theory}, Cambridge Studies Adv. Math. 8, Cambridge
Univ. Press. 1980

\bibitem{q} D.Quillen: On the formal group laws of unoriented and complex cobordism theory,
{\em Bull. AMS} 75 (1969) 1293-1298

\bibitem{scholze} P.Scholze: The local Langlands correspondence for $GL_n$ over $p$-adic fields, 
{\em Invent. Math.} 192 (2013), no. 3, 663-715

\bibitem{sw} P.Scholze, J.Weinstein: Moduli of $p$-divisible groups, {\em Camb. J. Math.} 1 (2013) 145-237


\bibitem{stricmult} N.P.Strickland: {\em Multicurves and Equivariant Cohomology}, Mem. AMS 1001 (2011)

\bibitem{strick} N.P.Strickland: Complex cobordism of involutions, {\em Geom. Top.} 5 (2001), 335-345

\bibitem{tate} J.T.Tate: $p$-divisible groups, {\em Proc. Conf. Local Fields (Driebergen,1966)}, 158-183, Springer,
Berlin, 1967


\end{thebibliography}
\end{document}